\newcommand{\Appendix}{\appendix
\section*{Appendix}
\addcontentsline{toc}{section}{Appendix}}
\definecolor{luciacolor}{rgb}{0.01,0.28,1.00}
\def\lucia#1{#1}
\def\charl#1{#1}
\newtheorem{thm}{Theorem}[section]
\newtheorem{cor}[thm]{Corollary}
\newtheorem{lemma}[thm]{Lemma}
\newtheorem{prop}[thm]{Proposition}
\theoremstyle{remark}
\newtheorem{rmk}[thm]{Remark}
\theoremstyle{definition}
\newtheorem{defn}[thm]{Definition}
\newtheorem{exa}[thm]{Example}
\newtheorem{notation}[thm]{Notation}
\numberwithin{equation}{section}
\def\beq{\begin{equation}}
\def\eeq{\end{equation}}
\def\N{{\mathbb N}}
\def\Z{{\mathbb Z}}
\def\Q{{\mathbb Q}}
\def\C{{\mathbb C}}
\def\I{{\mathbb I}}
\def\l{\left}
\def\r{\right}
\def\[[{\l[\l[}
\def\]]{\r]\r]}
\def\p{\prime}
\def\sgq{{\sigma_q}}
\def\sgqp{({\sigma_q},\partial)}
\def\Sgq{\Sigma_q}
\def\cf{\emph{cf. }}
\def\ie{i.e.}
\def\cC{{\mathcal C}}
\def\cF{{\mathcal F}}
\def\cM{{\mathcal M}}
\def\cN{{\mathcal N}}
\def\cR{{\mathcal R}}
\def\cS{{\mathcal S}}
\def\cV{{\mathcal V}}
\def\cW{{\mathcal W}}
\def\cX{{\mathcal X}}
\def\cY{{\mathcal Y}}
\def\wtilde{\widetilde}
\def\ul{\underline}
\def\om{\omega}
\def\GL{\mathop{\rm GL}}
\def \Alg{\mathop{\rm Alg}}
\def\Mer#1{{\cM(#1)}}
\def\Diff{{\rm DiffMod}}
\def\Hom{{\rm Hom}}
\def\Proj{{\rm Proj}}
\def\Aut{{\rm Aut}}
\def\Isom{{\rm Isom}}
\def\Vect{{\rm Vect}}
\def\Gal{{\rm Gal}}
\def\$$endproof{\eqno{\qedhere}$$\end{proof}}
\newenvironment{itemize*}%
  {\begin{itemize}%
    \setlength{\itemsep}{0pt}%
    \setlength{\parskip}{0pt}}%
  {\end{itemize}}
  \newenvironment{enumerate*}%
  {\begin{enumerate}%
    \setlength{\itemsep}{0pt}%
    \setlength{\parskip}{0pt}}%
  {\end{enumerate}}
\title{Galois theories of $q$-difference equations: comparison theorems}
\author{Lucia Di Vizio\footnote{Lucia DI VIZIO,
Laboratoire de Math\'ematiques UMR 8100, CNRS,
Universit\'e de Versailles-St Quentin,
45 avenue des \'Etats-Unis
78035 Versailles cedex, France. {\tt lucia.di.vizio@math.cnrs.fr}}
~and Charlotte Hardouin\footnote{Charlotte HARDOUIN, Institut de Math\'{e}matiques de Toulouse,
118 route de Narbonne,
31062 Toulouse Cedex 9, France. {\tt hardouin@math.univ-toulouse.fr}}}
\date{\today}
\begin{document}

\bibliographystyle{amsalpha}


\maketitle

\begin{abstract}
We establish some comparison results among the different parameterized Galois theories for $q$-difference equations, completing the work
\cite{ChatHardouinSinger}, that addresses the problem in the case without parameters.
Our main result is the link between the abstract parameterized Galois theories, that give information on the differential properties of abstract solutions of $q$-difference equations, and the properties of meromorphic solutions of such equations. Notice that a linear $q$-difference equation with meromorphic coefficients always admits a basis of meromorphic solutions, as proven in \cite{Praag2}.
\end{abstract}


\section*{Introduction}

A linear $q$-difference system is a
linear functional equation of the form $\vec{y}(qx)=A(x)\vec{y}(x)$, where, to fix ideas, we assume that
$A(x) \in \GL_\nu(\C(x))$ and that $q \in \C^*:=\C\smallsetminus\{0\}$ is not a root of unity.
This kind of functional equations appears in the literature for many reasons, for instance: they are discretization of
linear differential equations that can be recovered letting  $q \to 1$  in $\frac{\vec{y}(qx)-\vec{y}(x)}{(q-1)x}$;
they appear in combinatorial problems in relation with $q$-series; they have a geometric interpretation as functional equations on the torus
$\C^\ast/q^\Z$, when $|q|\neq 1$.
\par
A {functional equation} is useful as far as it allows to grasp properties of its solutions. To achieve this purpose, there are two different schools:
some may try to actually find explicit solutions to the equation, while others concentrate on its structural properties, that could give information
on solutions that we are unable or unwilling (but most of the time unable) to write.
Galois theory of difference equations follows the second {line of thoughts}.
\par
Difference Galois theory has been introduced in \cite{Franke-DifferenceGaloisTheory}.
Since the first systematic work \cite{vdPutSingerDifference}, which follows quite an abstract point of view,
the Galois theory of $q$-difference equation has been developing in many directions.
Theories and theorems can be classified according to different criteria, for instance:
\begin{enumerate}
  \item After \cite[Theorem 3]{Praag2} any $q$-difference equations with meromorphic coefficients
  has a basis of meromorphic solutions. Therefore one can decide to use it to define a Galois theory,
  as in \cite{SauloyENS}, or one can define abstract solutions as in \cite{vdPutSingerDifference}.
  \item
  One can define the Galois group of $\vec{y}(qx)=A(x)\vec{y}(x)$ as a group of automorphisms of a convenient extension containing a full set of solutions or use a more abstract Tannakian point of view.
\end{enumerate}
Following these different points of view, a certain number of  Galois groups have been defined in the literature.
In \cite{ChatHardouinSinger}, the problem of establishing the isomorphisms among them is addressed. See Theorem \ref{thm:compalgpv} below.

\par
In \cite{HardouinSinger}, the authors develop a parameterized Galois theory that
takes into account the action of a derivation on a set of solutions of a $q$-difference equations.
One of the drawbacks of this theory is that it is based on an abstract set of solutions and their derivatives, constructed
in an algebra over  a differentially closed extension of $\C$, therefore quite a huge field of definition.
A natural question is to compare these abstract solutions with the Galois theory constructed using a basis of meromorphic solutions of the system.
\par
To complete the picture, one can attach to any $q$-difference system several groups thanks to the theory of differential Tannakian categories introduced by
Kamensky  \cite{kamtan}
and Ovchinnikov  in  \cite{DifftanOv}.
A differential Tannakian category is a  generalization of the notion of Tannakian category introduced by Deligne \cite{Delgrotfest}.
As a  Tannakian category (with a fiber functor) is isomorphic   to  the category of representations of a linear algebraic groups, a  differential Tannakian category
(with a differential fiber functor) is isomorphic to  the category of representations of a linear differential algebraic group (see Appendix \ref{appendix:diffgeometry}).
Any full set of solutions determines a neutral differential fiber functor on the category generated by the associated $q$-difference module,
and hence a group.
Moreover, the differential Tannakian setting allows to introduce the so called intrinsic Galois groups, associated to non-neutral fiber functors.
\par
The main point of this paper is to establish the isomorphisms between all these groups in the literature, which should allow for a more fluid
application of $q$-difference Galois theory, being able to exploit the different advantages of each point of view.
In \cite{peonnietodiffcomp}, the author compared some
parameterized Galois groups defined over some  abstract base fields and obtained
the analogue of Theorem \ref{thm:weakextension} in her setting.
{In this paper, we are interested in
clarifying the relation between the meromorphic solutions and the abstract Galois theory, so that we can not rely on \cite{peonnietodiffcomp}.}

\smallskip
The paper is organized as follows. First we define the different kind of solutions and the algebras generated by them. This allows for a
definition of the Galois group as a group of automorphisms.
Then we recall the (differential) Tannakian formalism and \lucia{the different groups that such theory allow to associate to a $q$-difference system.}
We conclude comparing all the different groups defined in the paper.

\medskip
{\bfseries Acknowledgments.} We are indebted to several colleagues whose interest for this paper has not faded during its long preparation.
We would like to thanks the referee for her or his attentive reading and the useful remarks.

{\small\tableofcontents}


\section{$q$-difference systems and their solutions}\label{sec:qdiffsystemsolutions}

Let $K$ be a field of characteristic zero and $q\neq 0,1$ be a fixed element of $K$.
The field $K(x)$ is naturally a {$q$-difference field}, \ie, it is equipped with the
\index{$q$-difference}
\index{$q$-difference!field}
{$q$-difference operator}
\index{$q$-difference!operator}
$$
\begin{array}{rccc}
\sgq:&K(x)&\longrightarrow&K(x),\\
&f(x)&\longmapsto&f(qx).
\end{array}
$$
More generally, we will call $q$-difference field one of the following pairs:
\begin{enumerate*}
\item
a field extension
$\cF/K(x)$, with a field automorphism extending
the action of $\sigma_q$, which we will also call $q$-difference operator and denote by $\sgq$.
\item
any sub-field stable by $\sgq$ and $\sgq^{-1}$ of the fields considered at the previous item, with the restriction of $\sgq$.
\end{enumerate*}
We denote by $\cC$ (or sometimes $\cF^{\sgq}$) the field of $\sgq$-constants of $\cF$,
\ie, the subfield of the elements of $\cF$ fixed by $\sgq$.
The previous definition implies that $\cC$ with the identity automorphism is
a $q$-difference field, that we will call trivial.

\begin{exa}\label{exa:qdifffield}
Typical examples of $q$-difference extensions of $K(x)$ are:
\begin{trivlist}
\item 1.
the field of formal Laurent series $K((x))$, equipped with the automorphism
$\sgq(\sum_n a_nx^n)=\sum_n a_n q^ nx^n$;

\item 2.
the field  $K(x^{1/r})$, for $r\in\Z_{>1}$, equipped with $\sgq(x^{1/r})=\tilde{q}x^{1/r}$,
for a chosen $r$-th root $\tilde{q}$ of $q$;

\item 3.
for $K=\C$, the  field of meromorphic functions over
$\C^*=\C\smallsetminus\{0\}$.
\end{trivlist}
\end{exa}

A linear $q$-difference system (of order $\nu$) is a functional equation of the form
\beq\label{eq:sys}
\sgq(\vec{y})=A\vec{y},
\eeq
where $A$ is an invertible square matrix of order $\nu\geq 1$ with coefficients in a $q$-difference field $(\cF,\sgq)$ as above,
i.e., $A\in\GL_\nu(\cF)$, and $\vec{y}$ is a vector of unknowns. The solutions vectors are to be found in a $q$-difference extension of $(\cF,\sgq)$.
It is well known that they generate a vector space over $\cC$ of order at most $\nu$.

\smallskip
In the text below, we  will consider
the action of a derivation $\partial$ on the solution set of \eqref{eq:sys}.
To do so, we will assume that there exists a
derivation $\partial$ of the field $\cF$, commuting to the action of $\sgq$,
that is $\sgq\circ\partial=\partial\circ\sgq$.
We will call $(\cF,\sgq,\partial)$ a $q$-difference differential field or a $\sgqp$-field, for short. Any $q$-difference fields of Example
\ref{exa:qdifffield} is a $\sgqp$-field endowed with the derivation $\partial=x\frac{d}{dx}$.
Notice that, any $q$-difference field $(\cF,\sgq)$ can be turned in a
$q$-difference differential field with the trivial derivation.

\begin{rmk}
\label{rmk:prolongation}
If $\vec{y}$ is a solution vector of \eqref{eq:sys}, then the commutativity of $\sgq$ and $\partial$ and the Leibnitz rule imply that:
$$
\sgq\begin{pmatrix}\partial(\vec{y})\\\vec{y}\end{pmatrix}
=\begin{pmatrix} A & \partial(A)\\ 0 & A\end{pmatrix}
\begin{pmatrix}\partial(\vec{y})\\\vec{y}\end{pmatrix},
$$
hence $\begin{pmatrix}\partial(\vec{y})\\\vec{y}\end{pmatrix}$ is the solution vector
of a $q$-difference system of order $2\nu$.
This quite trivial remark is at the origin of more abstract
constructions considered below.
\end{rmk}

We recall the basic convention of difference and differential algebra,
that we will frequently use in what follows:
\begin{quote}
\it
Algebraic attributes always refer to the underlying ring, ideal or algebra and the operator
prefix highlights the compatibility of the algebraic  attribute with the operator prefix.
\end{quote}
For example:
\begin{itemize*}
  \item
  A $\sgqp$-$\cF$-algebra $R$ is an $\cF$-algebra equipped
  with a derivation $\partial$ and $\sgq$
  that extend to $R$ the action of $\partial$ and $\sgq$ on the $\sgqp$-field $\cF$.
  \item
  A $\sgq$-ideal (resp. $\sgqp$-ideal)
  of a $\sgq$-ring (resp $\sgqp$-ring) $R$ is an ideal of $R$ that is set-wise invariant by $\sgq$ (resp. by $\sgq$ and $\partial$).
\end{itemize*}
Since we will use more sophisticated notions of differential algebra, we have added a section on this topic in the appendix.
Classical references for further readings are for instance \cite{kolchin-diff-algebra}, \cite{Cohn:difference}, \cite{Levin}.

\smallskip
From now on we suppose that $(\cF,\sgq,\partial)$
is a $q$-difference differential field of characteristic zero
and that we are given a $q$-difference system
of the form \eqref{eq:sys}.


\subsection{Parameterized Picard-Vessiot rings}
\label{subsec:abstractsol}

In this section we recall some results on the existence of abstract solutions for linear $q$-difference systems endowed with the action of a derivation.
For a quick survey of the needed notions of differential algebra,
we refer to Appendix \ref{app:diffalgebra}.

\begin{defn}\label{defn:PPV}
A $\sgqp$-$\cF$-algebra $R$ is a {\em  parameterized Picard-Vessiot ring } for  \eqref{eq:sys} if
\begin{enumerate*}
\item $R$ is a simple $\sgqp$-$\cF$-algebra, \ie ,  there are no non-trivial ideal    being set-wise fixed by $\sgq$ and $\partial$;
\item there exists a $Z\in \GL_\nu(R)$ such that $\sgq(Z)=AZ$;
\item $R = \mathcal{F}\{Z, \det Z^{-1}\}_\partial$, i.e.,
$R$ is generated as a $\cF$-algebra by the entries of $Z$, the inverse of the determinant of $Z$ and all their derivatives with respect to $\partial$
(see Definition \ref{defn:differential-polynomials}).
\end{enumerate*}
\end{defn}

\noindent
Such a ring always exists: We consider the ring of differential
polynomials $S =\cF \{ Y,\det Y^{-1}\}_{\partial}$,
where $Y$ is a matrix of differential indeterminates over $\cF$ of order $\nu$.
A $q$-difference operator compatible with the differential structure
of the $\partial$-$\cF$-algebra $S$ can be defined setting:
$$
\begin{array}{l}
 \sgq(Y) =A Y,\\
  \sgq(\partial Y)= \partial(\sgq Y)=A \partial(Y) +\partial(A) Y,\\
  \hbox{and so on, using the commutativity of $\sgq$ and $\partial$ and the Leibnitz rule.}
\end{array}
$$
Any quotient
of the ring $S$ by a maximal $\sgqp$-ideal, i.e.,
a maximal ideal in the set of $\sgqp$-ideals, is a $\sgqp$-Picard-Vessiot ring.

\begin{lemma}[{\rmfamily\cite[Lemma 6.8]{HardouinSinger}}]
Any simple $\sgqp$-$\cF$-algebra $R$ that is finitely
generated as $\partial$-$\cF$-algebra,
has the following structure: {There exist a positive integer $t$ and $e_0,\dots,e_{t-1}$ idempotents of $R$,
generating the ideals $R_i:=e_iR$,
such that:
\begin{enumerate*}
\item
$R=R_0\oplus\dots \oplus R_{t-1}$;
\item
$\sgq$ permutes transitively the set $\{R_0,\dots,R_{t-1}\}$
and $\sgq^t$ leaves each $R_i$ invariant;
\item
each $R_i$ is a domain and a simple $(\sgq^t,\partial)$-$\cF$-algebra.
\end{enumerate*}}
\end{lemma}

A $\sgqp$-$\cF$-algebra
satisfying the three properties of the lemma above
is called a $\sgqp$-$\cF$-pseudo-domain, by analogy with the definition of
$\sgq$-$\cF$-pseudo-domain. See \cite[\S1.1]{Wibchev}.
We immediately obtain:

\begin{cor}
A parameterized Picard-Vessiot ring is a $\sgqp$-$\cF$-pseudo domain.
\end{cor}

Since $\sgq$ and $\partial$ commute, the field  $\cC$ is naturally a $\partial$-field. If it is $\partial$-closed (see
Definition \ref{defn:differentially-closed-field}) we have:

\begin{prop}[{\rmfamily\cite[Proposition 2.4]{HardouinSinger}}]
\label{prop:PVsigmaclosed}
If $\cC$ is a $\partial$-closed field, then the $\sgq$-constants of any parameterized Picard-Vessiot
ring coincide with $\cC$.
Moreover, any two parameterized Picard-Vessiot rings are isomorphic as $\sgqp$-$\cF$-algebras.
\end{prop}

In analogy with \cite[Definition 2.1]{ChatHardouinSinger} we set:

\begin{defn}\label{defn:weak parameterized Picard-Vessiot ring}
A $\sgqp$-$\cF$-algebra $R$ is a weak parameterized Picard-Vessiot ring
for \eqref{eq:sys}  if
\begin{enumerate*}
\item $R =\cF\{Z,\det Z^{-1}\}_\partial$, where $Z \in \GL_\nu(R)$ and $\sgq(Z)=AZ$;
\item $\cC:=\cF^{\sgq}=R^{\sgq}$.
\end{enumerate*}
\end{defn}

Proposition \ref{prop:PVsigmaclosed} says that,
if we have a differentially closed field of $\sgq$-constants,
a parameterized Picard-Vessiot ring
is always a weak parameterized Picard-Vessiot ring.
However M. Wibmer has proved that assuming that $\cC$ is
is algebraically closed is enough to ensure the existence:

\begin{prop}[{\cite[Corollary 9]{Wibdesc} and \cite[Proposition 1.16 and Corollary 1.19]{diviziohardouinPacific})}]
\label{prop:weakPVexistence}
If $\cC$ is algebraically closed, there exists a weak parameterized
Picard-Vessiot ring $R$, which is moreover $\sgq$-simple, \ie , has no non-trivial $\sgq$-ideals.
\end{prop}

\begin{rmk}
\lucia{Notice that the proposition above proves more than needed. Indeed the
weak parameterized
Picard-Vessiot ring constructed in Proposition \ref{prop:weakPVexistence} is
$\sgq$-simple, hence it is \emph{a fortiori}
a simple $\sgqp$-$\cF$-algebra. This means that it is also a parameterized
Picard-Vessiot ring in the sense of Definition \ref{defn:PPV}.
While it is relatively easy to construct a $\sgqp$-simple
parameterized Picard-Vessiot ring, Wibmer's idea for the construction of
 $\sgq$-simple parameterized Picard-Vessiot ring is quite subtle.}
\end{rmk}

If $\cC$ is algebraically closed, uniqueness  is assured only after extension  to a differential closure of $\cC$.
See Proposition \ref{prop:PVsigmaclosed} above and \cite[page 164]{Wibdesc}.

\medskip
We can now define the parameterized difference Galois group:

\begin{defn}\label{defn:functorautomorphisms}
Let $R$ be a weak parameterized Picard-Vessiot ring for a $q$-difference system \eqref{eq:sys} defined over $\cF$.
We define the parameterized difference Galois group of $R/\cF$ as {the covariant functor:}
$$
\begin{array}{cccc}
 G_R^\partial : & \partial\mbox{-}\cC\mbox{-algebras}  & \rightarrow &  \mbox{Groups}  \\
    & \cS & \mapsto & \Aut_{\cF\otimes \cS}^{\sgqp}(R \otimes_\cC\cS),
\end{array}
$$
where: the derivation $\partial$ of $R \otimes_\cC\cS$ is  { defined by $\partial_R \otimes id + id \otimes \partial_\cS$;}
the operator $\sgq$ extends from $R$ to $R \otimes_\cC\cS$ by linearity, acting trivially over $\cS$;
the notation $\Aut_{\cF\otimes \cS}^{\sgqp}(R \otimes_\cC\cS)$ stands for the group of $\sgqp$-$\cF \otimes_\cC\cS$-automorphisms of $R \otimes_\cC\cS$.
\end{defn}

We refer to Appendix \ref{appendix:diffgeometry} for the geometric definition used in the following proposition:

\begin{prop}\label{prop:groups}
The parameterized difference Galois group $G_R^\partial$ is representable
by a $\partial$-$\cC$-subgroup scheme   of $\mathrm{GL}_\nu(\cC)$.
\end{prop}

\begin{proof}
We omit this proof which is a straightforward parameterized \charl{analog} of \cite[Proposition 2.2]{ChatHardouinSinger}.
\end{proof}

\begin{rmk}\label{rmk:classicalPV}
Let us suppose that $(\cF,\sgq,\partial)$ is a $q$-difference field with a trivial derivation $\partial$ and let us consider the field of
differential polynomials $S=\cF\{Y,\det Y^{-1}\}_\partial$
with the $q$-difference structure induced by \eqref{eq:sys}.
By construction, the derivation on $S$ is non-trivial, indeed
$\partial Y$ is non-zero. The $\partial$-ideal generated by $\partial Y$ is a $\sgqp$-ideal, in fact we have:
$$
\sgq(\partial Y)=\partial(A)Y+A(\partial Y)=A(\partial Y) \mbox{~and hence~} \sgq(\partial^k Y)= A (\partial^k Y) \mbox{~for all~} k \in \N.
$$
Moreover the quotient $S/(\partial Y)$ is nothing else that the ring of polynomials
$\cF [Y, \det Y^{-1} ]$, endowed with the $q$-difference structure given by $\sgq(Y)=AY$ and the trivial derivation.
Any of its maximal $\sgq$-ideals is the quotient of a maximal $\sgqp$-ideal of $S$ by $(\partial Y)$.
The reader familiar with the Galois theory of difference equations
will have already noticed that
the parameterized Picard-Vessiot ring that we have
constructed in this way is actually a usual Picard-Vessiot
ring for \eqref{eq:sys}
over $\cF$ (see \cite{vdPutSingerDifference}).
In this sense, we say that, when $\partial$ is
the trivial derivation, {a} (weak) parameterized Picard-Vessiot ring allows
to recover {a}
usual (weak) Picard-Vessiot ring. \lucia{Then the} definition above boils down to the definition of the usual difference Galois group,
which is representable by a linear algebraic group.
\par
This is true in a more general sense. For a general derivation $\partial$
and for a given (weak) parameterized Picard-Vessiot ring
$R=\cF\{Z,\det Z^{-1}\}_\partial$ for
\eqref{eq:sys} over $\cF$, we can consider the subalgebra
{$R_0=\cF[Z, \det Z^{-1}]$} of $R$. This is the usual (weak) Picard-Vessiot ring for
\eqref{eq:sys} over $\cF$  (see \cite[Proposition 2.8]{HardouinSinger}).
{ We remind that the  difference Galois group $G_R$ of $R_0/\cF$ is defined as follows
\beq\label{eq:G_R}
\begin{array}{cccc}
 G_R : & \cC\mbox{-algebras}  & \rightarrow &  \mbox{Groups}  \\
    & \cS & \mapsto & \Aut_{\cF\otimes \cS}^{\sgq}(R_0\otimes_\cC\cS),
\end{array}
\eeq
where
the operator $\sgq$ extends from $R_0$ to $R_0 \otimes_\cC\cS$ by linearity, acting trivially over $\cS$.
The difference Galois group $G_R$
is representable by a group-scheme defined over $\cC$ (see \cite[Prop. 2.2]{ChatHardouinSinger}).}
\par
{Notice that $G_R$ is an abuse of notation and we should write $G_{R_0}$ instead.
We prefer not to use a complicate notation, since there will be no confusion in the text below. }
\end{rmk}

\subsection{Weak parameterized Picard-Vessiot rings associated with meromorphic solutions}
\label{subsec:merosol}

We assume that our base $q$-difference field $(\cF,\sgq)$ is a subfield of the field $\Mer{\C^\ast}$ of meromorphic functions over $\C^\ast$ and
that $|q|\neq 0,1$.

\begin{rmk}\label{rmk:AssumptionsOnTheBaseField}
This is a restrictive assumption, but not as much as one could imagine.
Later on we will focus on a $q$-difference field  $(K(x),\sgq)$ of rational functions, where $K$ is a finitely generated extension of $\Q$.
Of course we can always embed $K$ into $\C$. If $q$ is transcendental over $\Q$, then we can always choose
an embedding such that $q$ will have an image in $\C$ of norm different than $1$. Of course, if $q$ is algebraic this is possible ``most of the times'' but not always.
See also Remark \ref{rmk:p-adic}.
\end{rmk}

We consider the elliptic curve $E:=\mathbb{C}^*/q ^{\mathbb{Z}}$
and its field $\C_E$ of elliptic functions, that is the meromorphic functions over $\C^\ast$ that are invariant by $\sgq$.
We recall the following result:

\begin{thm}[{\cite[Theorem 3]{Praag2}}]\label{thm:praagman}
Any linear $q$-difference system $\sgq(\vec{y})=A\vec{y}$, with $A\in\GL_\nu(\Mer{\C^\ast})$,
has a \lucia{a basis of solutions with coefficients}
in  $\Mer{\C^\ast}$, linearly independent over $\C_E$.
\end{thm}

The theorem above requires some comments. By a full basis of linearly independent solutions we mean $\nu$ solution vectors
$\vec{y}_1,\dots,\vec{y}_\nu\in\Mer{\C^\ast}$,
linearly independent over $\C_E$. One usually say that
$\sgq(\vec{y})=A\vec{y}$ admits a \emph{fundamental matrix} of solutions $Y\in\GL_\nu(\Mer{\C^\ast})$, whose columns are $\vec{y}_1,\dots,\vec{y}_\nu\in\Mer{\C^\ast}$, which summarize
the conclusion of the theorem.
To the best of our knowledge, \lucia{there is no constructive proof
of the existence
of a basis of} meromorphic solutions
of a general $q$-difference system with meromorphic coefficients.
We are able to do it in full generality under the assumption that $A\in\GL_\nu(\C(x))$
(see \cite{DreyfusMeromorphicSolutions}).

\begin{exa}
Let us assume that $|q|>1$.
The Jacobi theta function
$\Theta_q(x)=\sum_{n\in\Z}q^{-n(n-1)/2}x^n $
is an element of $\Mer{\C^*}$.
It is solution of the $q$-difference equation
$y(qx)=qx\,y(x)$.
Following \cite{Sfourier}, one can use the following meromorphic functions
\begin{itemize*}
\item
$\Theta_q(cx)/\Theta_q(x)$, with $c\in \C^*$,  solution of $y(qx)=cy(x)$,
\item
$x\Theta_q^\p(x)/\Theta_q(x)$, solution of $y(qx)=y(x)+1$,
\end{itemize*}
to write a meromorphic fundamental solution to any $q$-difference system that is regular singular system at $0$ or at $\infty$ (see \cite[\S 0.1]{Sfourier}).
\end{exa}

\begin{rmk}\label{rmk:p-adic}
If $K$ is a finitely \charl{generated} extension of $\Q$ and $q$ is not a root of unity,
one can always embed $K$ in $\C$ or in $\C_p$ in a way that
$|q| \neq 1$. Let us focus on the case of an embedding in $\C_p$.
Since the Jacobi Theta function converges over $\C_p^\ast$,
one can transpose the results of J. Sauloy and T. Dreyfus to the $p$-adic setting
and construct a fundamental matrix of solutions of
$p$-adic meromorphic function over $\C_p^\ast$
for a linear $q$-difference system defined over $K(x)$.
In other words, one can always assume that a system with coefficients
in $K(x)$ has a \charl{ fundamental matrix of solutions with  meromorphic coefficients} in some sense, archimedean or $p$-adic.
This result is commonly accepted but yet there are no references for it.
It would allow us to apply the results below to a broader range of cases.
\end{rmk}

Theorem \ref{thm:praagman} by Praagman ensures the existence of nice solutions, but it has a cost.
The field of meromorphic functions over $\C^*$ fixed by $\sgq$ coincides with the
field of meromorphic functions over the torus $\C^\ast/q^\Z$, therefore we have enlarged considerably the
field of constants with respect to the ``expected'' algebraically closed
field of constants $\C$.
\par
Since  $\sgq$ and $\partial:=\frac{d}{dx}$ commute, the derivation $\partial$ stabilizes $\C_E$ inside $\Mer{\C^*}$,
so that $\C_E$ is naturally endowed with a structure of $\partial$-field.
It is a trivial $\sgq$-field.
Let $\overline{\C}_E$ be an algebraic closure of $C_E$  and $\wtilde\C_E$ a differential closure
of $\overline\C_E$ with
respect to $\partial$
(\cf \cite[\S9.1]{cassisinger}). We still denote by $\partial$ the derivation of $\wtilde\C_E $.
One  endows $\wtilde\C_E$ and $\overline{\C}_E$ with a structure of trivial $\sgq$-field.
{We want to show} that  these trivial $\sgq$-fields are  \emph{compatible} with the $\sgq$-field $\C(x)$ in the sense that  there exists a  joint $\sgq$-field extension of   $\C(x)$ and  $\widetilde \C_E$ {(see Corollary \ref{cor:constantextension} below).}

\begin{lemma}\label{lemma:lindisjoint}
Let $\mathcal{C}$ be a field extension of $\C$ endowed with a trivial action {of $\sgq$}.
The fields $\mathcal{C}$ and $\C(x)$ are linearly disjoint \lucia{in $\cC(x)$} over $\C$.
\end{lemma}

\begin{rmk}
\lucia{The inclusion $\C\hookrightarrow\cC$ extends to an inclusion of
field of rational functions  $\C(x)\hookrightarrow\cC(x)$,
therefore the statement above makes sense.}
\end{rmk}

\begin{proof}
This is a well known property of difference fields and the proof uses very standard ideas. We give it here for completeness.
Let $f_0,\dots, f_r\in \mathcal{C}$ be linearly independent over $\C$ and let us suppose that they become linearly dependent over $\C(x)$.
We suppose that $r>0$ is minimal for this property. Then there exist $a_1,\dots,a_r\in\C(x)\smallsetminus\{0\}$, not all belonging to $\C$,
such that
$f_0+a_1f_1+\dots+a_rf_r=0$. Applying $\sgq$ and subtracting the obtained equation, we deduce that
$(\sgq(a_1)-a_1)f_1+\dots+(\sgq(a_r)-a_r)f_r=0$. The minimality of $r$ implies that the $a_i$'s are in $\C$, against the assumption. This proves the claim.
\end{proof}

Let $\mathcal{C}$ be a trivial $\sgq$-field extension of $\C$.
Thanks to the previous lemma, we know the compositum $\mathcal{C}(x)$ of $\mathcal{C}$ and  $\C(x)$ over $\C$
coincides with the field of fractions of $\cC\otimes_\C \C(x)$.
We have:

\begin{cor}\label{cor:constantextension}
Let $\mathcal{C}$ be a $\sgqp$-field extension of $\C$ {and a trivial $\sgq$-field.} The field $\mathcal{C} (x)$   is
a $\sgqp$-field with the action of $\sgq$ defined by the properties that ${\sgq}_{\vert\mathcal{C}}$ is the
identity of $\mathcal{C}$ and $\sgq(x)=qx$.
\end{cor}

\begin{proof}
The Leibnitz rule allows to extend $\partial$ to the $\mathcal{C}\otimes_\C \C(x)$  and one can easily extend $\partial$ to its quotient (\cite[Exercices 1.5]{vdPutSingerDifferential}). The action of $\sgq$ on  {$\mathcal{C}\otimes_\C \C(x)$} defined by $id \otimes \sgq$ is injective and extends to  $\mathcal{C} (x)$. The commutativity of $\sgq$ and $\partial$ is straightforward.
\end{proof}

Corollary \ref{cor:constantextension},
taking $\mathcal{C}=\C_E$,
allows to consider the $\sgqp$-field extensions $\C_E(x)$ of $\C(x)$.
We can finally construct a weak parameterized Picard-Vessiot ring associated with Praagman's meromorphic solutions:

\begin{prop}
\label{prop:diffPV}
Let $\sgq(\vec{y})=A\vec y$,
with $A \in \GL_\nu(\C(x))$, be a $q$-difference system
and let $U \in \GL_\nu(\Mer{\C^*})$ be a fundamental solution matrix.
The ring $R_E :=\C_E(x)\{ U,\det U^{-1}\}_\partial$
is a weak parameterized Picard-Vessiot ring over
$\C_E (x)$ for $\sgq(\vec{y})=A\vec y$ and is an integral domain.
\end{prop}

\begin{proof}
It is enough to notice that $R_E\subset \Mer{\C^*}$ and that $\C_E\subset R_E^{\sgq}\subset \Mer{\C^*}^{\sgq}=\C_E$.
\end{proof}

\subsection{Properties of the weak Picard-Vessiot rings $R$, $R_E$ and $\wtilde R$}
\label{subsec:summaryPV}

Let $\sgq(\vec{y})=A\vec y$,
with $A \in \GL_\nu(\C(x))$, be a $q$-difference system.
We have constructed three weak parameterized Picard-Vessiot rings for $\sgq(\vec{y})=A\vec y$:
\begin{enumerate}
  \item
  the weak parameterized Picard-Vessiot ring $R$ over \lucia{$\C(x)$},
  which is $\sgq$-simple and satisfies $R^{\sgq}=\C$, constructed applying Proposition \ref{prop:weakPVexistence}
  to $\sgq(\vec{y})=A\vec y$, seen as a system defined over $\cF=\C(x)$;
  \item
  the  weak parameterized Picard-Vessiot ring $R_E$ over \lucia{$\C_E(x)$}, constructed in Proposition \ref{prop:diffPV};
  \item
  the (weak) parameterized Picard-Vessiot ring $\wtilde R$ over \lucia{$\wtilde\C_E(x)$}, constructed applying
  Proposition \ref{prop:PVsigmaclosed} to $\sgq(\vec{y})=A\vec y$,
  seen as a system defined over $\cF=\wtilde\C_E(x)$.
\end{enumerate}

We remind that $R$ can be written in the form
\begin{equation}\label{eq:PVform}
R =\C(x)\{Y,\det Y^{-1}\}_\partial/ \mathfrak{q},
\end{equation}
where $Y$ is an invertible matrix satisfying
the system $\sgq(\vec{y}):=A\vec{y}$ and
$\mathfrak{q}$ is not only a maximal $\sgqp$-ideal but also a maximal $\sgq$-ideal (since $R$ is $\sgq$-simple, after Proposition \ref{prop:weakPVexistence}).

Definition \ref{defn:functorautomorphisms} applied to the three settings above allows to define the group schemes
\lucia{$G^\partial_{R}$, $G^\partial_{R_E}$ and $G^\partial_{\wtilde R}$, respectively.}
As functors they are represented by
\lucia{a $\partial$-$\C$-subgroup scheme of $\GL_\nu(\C)$
(resp.
a $\partial$-$\C_E$-subgroup scheme of $\GL_\nu(\C_E)$,
a $\partial$-$\wtilde\C_E$-subgroup scheme of $\GL_\nu(\wtilde\C_E)$).}
See Proposition \ref{prop:groups}.
We will prove that they become
isomorphic after a convenient field extension. To do so, we need to prove some properties of the three
Picard-Vessiot rings above and to give a Tannakian description of each one of the three groups
\lucia{$G^\partial_{R}$, $G^\partial_{R_E}$ and $G^\partial_{\wtilde R}$.}
It will be the object of
\S\ref{sec:qdiffmodules} and \S\ref{sec:Galoisgroups}.

\smallskip
The following statement is differential analogue of
\cite[Proposition 2.4]{ChatHardouinSinger}.

\begin{prop}\label{prop:pvextell}
Let $\sgq(\vec{y})=A\vec{y}$
be a $q$-difference system defined over $\C(x)$.
Let $\cF$ be a $\sgqp$-field extension of $\C(x)$ of the form $\cC(x)$,
where $\cC$ a $\sgqp$-field extension of $\C$, which is a trivial $\sgq$-field
(for instance $\cF=\C_E(x)$ or $\wtilde\C_E(x)$).
In the notation of \lucia{Eq. \eqref{eq:PVform}},
$S := \cF\{Y, \det Y^{-1} \}_\partial/ \mathfrak{q}\cF$ is a  parameterized
Picard-Vessiot ring for $\sgq(\vec{y})=A\vec{y}$ \lucia{over $\cF$} and $S^{\sgq} = \cC$.
\end{prop}

\begin{proof}
First we remark that  $\mathfrak{q} \cF \subsetneq \cF\{Y, \det Y^{-1} \}_\partial$ and hence that $S$ is non-zero.
Indeed, if $1 =\sum_{i \in I} \lambda_i P_i$ with $P_i \in \mathfrak{q}$ and  $\lambda_i \in \cF$, it is enough to expand the $\lambda_i$'s in a $\C(x)$-basis of
{$\cF$} to conclude that  $1 \in \mathfrak{q}$.
\par
We consider the natural map of $\sgq$-rings
$$
\phi: R \otimes_\C\cC \rightarrow S.
$$
We want to prove that $\phi$ is injective.
Since $\phi(1\otimes 1)=1$ and $S\neq 0$, $\mathfrak{I}:=\mathop{\rm Ker}\phi$ is a proper $\sgq$-ideal of $R \otimes_\C\cC $.
Proposition \ref{prop:weakPVexistence} implies that $R$ is $\sgq$-simple.
Moreover $R^{\sgq}=\C$.
Therefore the $\sgq$-ideal
$\mathfrak{I}$ in $R \otimes_\C\cC$ is generated by $\mathfrak{I} \cap R$
(see  \cite[Lemma 1.11]{vdPutSingerDifference}).
We deduce that $\mathfrak{I}$ is $\{0\}$.
This means that $\phi$ is injective.
Notice that the same argument shows that
{ any $\sgq$-ideal $\mathfrak{J}$ of $R\otimes_\C\cC$ is generated by its intersection with $R$. Since
 $R$ is $\sgq$-simple, we deduce that $R\otimes_\C\cC$ is $\sgq$-simple.}
\par
Now let $R'=\phi (R \otimes \cC)$. Since $\phi$ is a $\sgq$-morphism, the ring
$R'$ is $\sgq$-simple.
Lemma \ref{lemma:lindisjoint} implies that
the field $\cF$ is the fraction field of $\cC \otimes_\C \C(x)$. Then, it is easily seen that for any
$P \in \cF\{Y, \det Y^{-1} \}_\partial$ there exist $a \in  \left(\cC \otimes_\C \C(x) \right)^\ast$ such that $aP \in \cC \otimes_\C \C(x)\{Y, \det Y^{-1} \}_\partial$.
Then,
 for all $x \in S$, there exists
$a \in  \left(\cC \otimes_\C \C(x) \right)^\ast$ such that $ax \in R'$. This proves that any $\sgq$-ideal
$\mathfrak{J}$ in $S$ is generated by $\mathfrak{J} \cap R'$,
hence $S$ is $\sgq$-simple and thus $\sgqp$-simple. We conclude that $S$ is a parameterized Picard-Vessiot ring for $\sgq(\vec{y})=A\vec{y}$.
\par
Finally, for any $c \in S^{\sgq}$, the set $\mathfrak{h}=\{a \in R' | ac \in R' \}$ is a non-zero
$\sgq$-ideal of $R'$. Since $R'$  is $\sgq$-simple, we get that $1 \in \mathfrak{h}$.
Therefore $c \in R'$ and
\lucia{$S^\sgq=(R^\p)^\sgq=\phi(R\otimes_\C\cC)^\sgq$.}
If one considers a $\C$-basis of $\cC$, which is formed by $\sgq$-constants, one can easily prove that the $\sgq$-constants of $R\otimes_\C\cC$ coincide with $R^\sgq \otimes_\C \cC= \cC$ since $R^\sgq=\C$.
\end{proof}

As corollary of the previous proposition, {we find:}

\begin{cor}\label{cor:isoppv}
{Let $\sgq(\vec{y})=A\vec{y}$
be a $q$-difference system defined over $\C(x)$ and
let $R,R_E$ and $\wtilde R$
be the weak parameterized Picard-Vessiot rings attached to
$\sgq(\vec{y})=A\vec{y}$.
As above, we write $R =\C(x)\{Y, \det Y^{-1} \}_\partial/ \mathfrak{q}$.
We consider the two rings:
$$
S:= \C_E(x)\{Y, \det Y^{-1} \}_\partial/
\mathfrak{q}\C_E(x).
\hbox{~~~and~~~}
\wtilde S := \wtilde \C_E(x)\{Y, \det Y^{-1} \}_\partial/ \mathfrak{q} \wtilde \C_E (x)
$$
Then the two natural maps
\lucia{$$
S \otimes \wtilde \C_E\longrightarrow R_E \otimes \wtilde\C_E
\hbox{~~~and~~~}
\wtilde S\longrightarrow\wtilde R
$$}
are both an isomorphism of $\sgqp$-$\wtilde\C_E(x)$-algebras.}
\end{cor}

\begin{proof}
By Proposition \ref{prop:pvextell}, applied to $\cF=\C_E(x)$ and $\cF=\wtilde \C_E(x)$, we find
that $S$ (resp. $\wtilde  S$) is a parameterized Picard-Vessiot ring
{for $\sgq(\vec{y})=A\vec{y}$  over \lucia{$\C_E(x)$} (resp. $\wtilde\C_E(x)$)}
such that $S^{\sgq} =\C_E$ (resp. $\wtilde S^{\sgq} =\wtilde \C_E$).
Since $\wtilde \C_E$ is differentially closed, \lucia{Proposition \ref{prop:PVsigmaclosed}}
assures that two parameterized Picard-Vessiot rings for the same $q$-difference system over
$\wtilde \C_E(x)$ are isomorphic as  $\wtilde \C_E(x)$-$\sgqp$-algebras. The first isomorphism
follows from this fact.
\par
The second isomorphism comes from a parameterized version of
\cite[Proposition 2.7]{ChatHardouinSinger}.  Its proof follows line by line the proof in the algebraic case, but we give it here for sake of completeness. Let us denote by $F_E$ the fraction field
of $R_E$ and let $X=(X_{i,j})$ be a $\nu \times \nu$-matrix
of differential indeterminates over $F_E$. Let $\mathcal S :=\C_E(x) \{X, \det X^{-1} \}_\partial
\subset F_E \{X, \det X^{-1} \}_\partial $. Define a $\sgqp$-structure on $F_E \{X, \det X^{-1} \}_\partial $ by setting $\sgq(X):=AX, \sgq(\partial X):= A \partial X + \partial A X$, and so on.
This induces a $\sgqp$-structure on $\mathcal S$. Since $S$ is a parameterized Picard-Vessiot ring
for $\sgq(Y)=AY$ view over $\C_E(x)$, we can write $S =\mathcal{S}/ \mathfrak{p}$, where
$\mathfrak{p}$ is a maximal $\sgqp$-ideal of $\mathcal{S}$. Now, let $U \in \GL_\nu(R_E)$ be fundamental
solution matrix of $\sgq(Y)=AY$. Define \lucia{$Y = (Y_{i,j}) \in \GL_\nu(F_E\{X, \det X^{-1} \}_\partial)$}
via $Y:= U^{-1} X$ and remark that $\sgq(Y)=Y$ and $F_E\{X, \det X^{-1} \}_\partial=F_E\{Y, \det Y^{-1} \}_\partial$. Define $\mathcal{S}_1 := \C_E\{Y, \det Y^{-1} \}_\partial$. The ideal
$\mathfrak{p} \subset \mathcal{S} \subset F_E\{X, \det X^{-1} \}_\partial$ generates a
$\sgqp$-ideal $(\mathfrak{p})$ in $F_E\{X, \det X^{-1} \}_\partial$, which intersected with $\mathcal{S}_1$ gives a $\partial$-ideal $\mathfrak{a}$. Since $\wtilde \C_E$ is differentially closed
and $\mathcal{S}_1/\mathfrak{a}$ is differentially finitely generated over $\C_E$, we find a differential
homomorphism \lucia{$\mathcal{S}_1  \rightarrow \mathcal{S}_1/\mathfrak{a} \rightarrow \wtilde\C_E $.}
We can extend this homomorphism into a $\sgqp$-morphism
\lucia{$F_E\{X, \det X^{-1} \}_\partial  =F_E \otimes \mathcal{S}_1 \rightarrow F_E \otimes_{\C_E} \wtilde\C_E $.}
Restricted to $\mathcal{S}$, we find a $\sgqp$-morphism $\mathcal{S} \rightarrow
F_E \otimes \wtilde \C_E$, whose kernel contains $\mathfrak{p}$. By maximality of
$\mathfrak{p}$, we have equality and we get an embedding $\iota : S =\mathcal{S}/\mathfrak{p} \rightarrow F_E \otimes \wtilde \C_E$.
Now, if we denote by \lucia{$V \in \GL_\nu(S)$} a fundamental solution matrix
of $\sgq(Y)=AY$ {and we remind that $(F_E\otimes \wtilde \C_E)^{\sgq}=\wtilde \C_E$, we find}
that $\iota(V)= U D$ with \lucia{$D \in \GL_\nu(\wtilde \C_E)$.}
Since $S$ (resp. $R_E$)
is differentially generated over $\C_E(x)$ (resp. $\wtilde \C_E(x)$) by $V$ (resp $U$) and the inverse of its determinant,   this allows us
to conclude that $\iota(S \otimes \wtilde \C_E)= R_E \otimes \wtilde \C_E$.
\end{proof}


\section{The category of $q$-difference modules}
\label{sec:qdiffmodules}

We are interested in giving an interpretation of Picard-Vessiot extensions from a categorical point of view, therefore we introduce here
the category of $q$-difference modules.
Since we are interested in studying the action of the derivation $\partial$, we will quickly review the basic definitions
and properties of differential Tannakian categories, introduced by
Kamensky  \cite{kamtan}
and Ovchinnikov  in  \cite{DifftanOv}.

\subsection{$q$-difference modules}

Let $(\cF,\sgq,\partial)$ be a $\sgqp$-field \charl{of characteristic zero} and let $\cC=\cF^{\sgq}$.

\begin{defn}
A {$q$-difference module $\cM_\cF=(M_\cF,\Sgq)$ (of rank $\nu$) over $\cF$}
\index{$q$-difference!module}
is a finite dimensional $\cF$-vector space $M_{\cF}$ (of dimension $\nu$)
equipped with an invertible $\sgq$-semi-linear operator $\Sgq:M_\cF\to M_\cF$, \ie,
a bijective additive map from $M_\cF$ to itself such that
$$
\Sgq(fm)=\sgq(f)\Sgq(m),
\hbox{~for any $f\in \cF$ and $m\in M_{\cF}$.}
$$
We will call $\Sgq$ a $q$-difference operator over $M_\cF$ or the $q$-difference operator of $\cM_\cF$.
\par
A $q$-difference submodule $\cN_\cF$ of $\cM_\cF$ is a $\cF$-vector subspace $N_\cF$
of $M_\cF$ that is set-wise invariant with respect to $\Sgq$. Then, $\cN_\cF= (N_\cF,\Sgq{|_{N_{\cF}}})$ is
a $q$-difference module.
\par
A {morphism of $q$-difference modules (over $\cF$)} is a morphism
of the underlying $\cF$-vector spaces, commuting with the $q$-difference operators defined on the domain and on the image of the morphism.
We denote by $\Diff(\cF, \sgq)$ the category of $q$-difference modules over $\cF$.
\end{defn}

Let $\cM_{\cF}=(M_{\cF},\Sgq)$ be a $q$-difference module over $\cF$ of rank $\nu$.
We fix a basis $\ul e$ of $M_{\cF}$ over $\cF$.
Let $A\in \GL_\nu(\cF)$ be such that:
$$
\Sgq\ul e=\ul e A.
$$
If $\ul f$ is another basis of $M_\cF$, such that $\ul f=\ul e F$, with $F\in\GL_\nu(\cF)$, then
$\Sgq\ul f=\ul f B$, with $B=F^{-1}A\sgq(F)$. Conversely, given an invertible matrix $A \in \GL_\nu(\cF)$, one construct
a $q$-difference module $\cM_\cF$ as follows: $M_\cF= \cF^\nu$ and $\Sgq\ul e =\ul e A$ with $\ul e$ the canonical basis of $\cF^\nu$.

\smallskip
The elements $m\in M_\cF$ such that $\Sgq(m)=m$ are called horizontal.
If a horizontal element $m$ corresponds to a vector $\vec{y}\in \cF^\nu$ with respect to the basis $\ul e$, we have:
$\ul e\vec{y}=\Sgq(\ul e \vec{y})=\ul e A\sgq(\vec{y})$.
Therefore $\vec{y}$ verifies the linear $q$-difference system $\sgq(\vec{y})=A^{-1}\vec{y}$, that we call the linear difference system
associated to $\cM_\cF$ with respect to the basis $\ul e$.

\smallskip
The constructions of linear algebra
(i.e., direct sums, duals, the tensor products) of the underlying vector spaces of two  $q$-difference modules over $\cF$ can be endowed with a structure  of $q$-difference modules (see for instance  \cite[Chapter 12]{vdPutSingerDifference},
\cite[Part I]{DVInv} or \cite{gazette}). The category $\Diff(\cF, \sgq)$ is a tensor category and we denote by
$\mathbf 1 =(\cF,\sgq)$ the unit object for the tensor product.  It is also is a rigid
category, \ie, it has internal Homs and each object is canonically isomorphic to its bidual. It is therefore a Tannakian category in the sense of
\cite{delmil} (see \cite{Sauloy-Galois-AENS}).
If $\cC$ is algebraically closed, the general theory of Tannakian categories ensures
that it is equivalent to the category of representation of
a certain {$\cC$}-group-scheme $G$.

\subsection{The differential Tannakian structure of $\Diff(\cF, \sgq)$}

In this section we define the prolongation functor in the general framework of projective modules.
The definition may seem very abstract at the first glance but
we will show in Example \ref{exa:prolongation}
that it is an incarnation of Remark \ref{rmk:prolongation}.

\smallskip
We consider a $\partial$-field $k$ and a $\partial$-$k$-algebra $\cS$.
We denote  by $\cS[\partial]_{\leq 1}$
the 2-dimensional free $\cS$-module of differential operators of
order less or equal to $1$.
In agreement with the Leibniz rule, the
right $\cS$-module structure of $\cS[\partial]$  is given by $\partial. a = a.\partial + \partial(a)$.

\begin{defn}
We define on the category $\Proj_\cS$
of finitely generated projective modules over $\cS$ an endofunctor  $F_\partial$,
called prolongation functor,
as follows:
\begin{itemize*}
\item
For $M$ an object of $\Proj_\cS$, we define $F_\partial(M):=\cS[\partial]_{\leq 1}\otimes_\cS M$,
where the tensor product is consider with respect to the right $\cS$-module structure of $\cS[\partial]_{\leq 1}$.
The $\cS$-module structure of $F_\partial(M)$ is defined by:
$\lambda(\partial\otimes v)= \partial\otimes \lambda v -\partial(\lambda)\otimes v$, for all $\lambda \in \cS$ and $v \in M$,
and extended by linearity.
\item
If $f \in\Hom_{Proj_\cS}(M,N)$,
we define $F_\partial(f): F_\partial(M) \rightarrow F_\partial(N)$ as:
$F_\partial(f)(\partial^i\otimes m)=\partial^i\otimes f(m)$, for $i=0,1$, where
we have used the convention that $\partial^0$ is the identity map.
\end{itemize*}
\end{defn}

\begin{rmk}
We will informally call constructions of linear differential algebra the family of all the constructions of linear algebra plus $F_\partial$.
Notice that, if $\partial$ is the trivial derivation, then $F_\partial(M)$
coincides with the direct sum $M\oplus M$.
\end{rmk}

{The underlying vector spaces of the objects of $\Diff(\cF,\sgq)$ form a subcategory of $\Proj_\cF$.}
Since $\cF$ is a field, $\Proj_\cF$ is the category of vector spaces over $\cF$,
that we will also denote $\Vect_\cF$.
Given an object $\cM_\cF =(M_\cF,\Sgq)$ of $\Diff(\cF,\sgq)$,
we are able to extend the action of $\Sgq$ to $F_\partial(M_\cF)$ via
$$
\Sgq(\partial^i(m)):=\partial^i(\Sgq(m)),
\hbox{~for $i=0,1$ and $m \in M_\cF$.}
$$
We set $F_\partial(\cM_\cF)=(F_\partial(M_\cF),\Sgq)$.
This
shows that $F_\partial$ {extends} to  an endofunctor  of $\Diff(\cF,\sgq)$. Together
with this additional structure,  $(\Diff(\cF,\sgq),F_\partial)$
is a differential Tannakian category over $\cC$ as defined in \cite[\S 4.4]{GilGorchOV},
\ie, a $\cC$-linear, tensor, rigid category together with a prolongation functor, satisfying some
natural commutative diagrams, that we are not recalling here.

\begin{exa}
\label{exa:prolongation}
Let $\cM_\cF =(M_\cF,\Sgq)$ be a $q$-difference module over $\cF$. We fix a basis $\ul e=(e_1,\dots,e_{\nu})$ of $M_\cF$
such that $\Sgq\ul e=\ul e A$, for some $A\in\GL_\nu(\cF)$.
A basis of $F_\partial(M_\cF)$ is given by $(\ul e, \partial\otimes\ul e)$. The definition of $\Sgq$ on
$F_\partial(M_\cF)$ is reminiscent of
Remark \ref{rmk:prolongation}:
$$
\Sgq(\ul e, \partial\otimes\ul e)=(\ul e, \partial\otimes\ul e)
\begin{pmatrix}
  A & \partial(A) \\
  0 & A
\end{pmatrix}.
$$
\end{exa}

Following \cite[Definition 4.9]{GilGorchOV}, we recall the notion of differential
fiber functor.

\begin{defn}
Let $\cS$ be a  $\partial$-$\cC$-algebra. We say that a functor
$\omega : \Diff(\cF,\sgq) \rightarrow \Proj_\cS$ is a differential fiber functor
over $\cS$ if it is exact, faithful, $\cC$-linear, tensor compatible
and if it commutes to $F_\partial$,
\ie, if $F_\partial \circ \omega =\omega \circ F_\partial$ as a natural isomorphism.
We say that $\omega$ is a neutral differential fiber functor if $\cS=\cC$.
\end{defn}

\begin{rmk} For further reference we point out that:
\begin{trivlist}
\item $\bullet$
A differential fiber functor is also a fiber functor for the classical
Tannakian theory \cite[p. 148]{delmil}.
\item $\bullet$
The forgetful functor $\eta_\cF: \Diff(\cF,\sgq) \rightarrow \Vect_\cF$, which assigns to any $q$-difference module its underlying $\cF$-vector space,  is a differential fiber functor over $\cF$.
\end{trivlist}
\end{rmk}

Since one of our main purposes is to compare distinct fiber functors, we introduce the functor of  differential tensor
morphisms between two differential fiber functors.

\begin{defn}[Def. 1.12 in \cite{delmil}]\label{defn:isomfibregroup}
Let $\omega_1,\omega_2 : \Diff(\cF,\sgq) \rightarrow \Proj_\cS$ be two differential fiber functors.
For any $\cS$-algebra $\cR$, we define $\Hom^\otimes(\omega_1,\omega_2)(\cR)$ as the set of all sequences of the form
$\l\{ \lambda_{\cX_\cF} | \cX_\cF \mbox{ object of } \Diff(\cF,\sgq)\r\}$,
such that:
\begin{itemize*}
\item
$\lambda_{\cX_\cF}$ is an $\cR$-linear homomorphism from $\omega_1(\cX_\cF) \otimes \cR$ to $\omega_2(\cX_\cF) \otimes \cR$,
\item
$\lambda_{\mathbf{1}}$ is the identity on $\mathbf{1} \otimes \cR$,
\item
for every $\alpha \in\Hom(\cX_\cF,\cY_\cF)$, we have
\charl{$\lambda_{\cY_\cF} \circ (\omega_1(\alpha) \otimes id_\cR) =(\omega_2(\alpha) \otimes id_\cR)\circ \lambda_{\cX_\cF}$},
\item  $\lambda_{\cX_\cF}\otimes \lambda_{\cY_\cF} =\lambda_{\cX_\cF\otimes \cY_\cF }$.
\end{itemize*}
For a $\partial$-$\cS$-algebra $\cR$,
we define $\Hom^{\otimes,\partial}(\omega_1,\omega_2)(\cR)$ as the
subset of $\Hom^\otimes(\omega_1,\omega_2)(\cR)$ of all sequences such that
$F_\partial(\lambda_{\cX_\cF}) =\lambda_{F_\partial(\cX_\cF )}$,
where the $F_\partial$ on the left hand side is the prolongation functor on $\Proj_\cR$ whereas the $F_\partial$ on the right hand side
is the prolongation functor in $\Diff(\cF,\sgq)$ (see \cite[\S 4.3]{DifftanOv}).
\end{defn}

The functor $\Hom^{\otimes,\partial}(\omega_1,\omega_2)$,
 composed with the forgetful functor from $\partial$-$\cS$-algebras to $\cS$-algebras is a subfunctor of $\Hom^{\otimes}(\omega_1,\omega_2)$.
By  \cite[Prop.6.6]{Delgrotfest} the functor
 $\Hom^{\otimes}(\omega_1,\omega_2)$
is representable by a $\cS$-scheme.

\smallskip
Since morphisms of tensor functors are isomorphisms (see \cite[Proposition 1.13]{delmil}),
differential morphisms of differential tensor functors are also differential isomorphisms.
Thus,  we will now write $\Isom^{\otimes,\partial}(\omega_1,\omega_2)$ (resp. $\Isom^{\otimes}(\omega_1,\omega_2)$) instead of $\Hom^{\otimes,\partial}(\omega_1,\omega_2)$ (resp. $\Hom^{\otimes}(\omega_1,\omega_2)$) and,  when $\omega_1=\omega_2=\omega$, we write $\Aut^{\otimes,\partial}(\omega)$ (resp.  $\Aut^{\otimes}(\omega)$). In that special case, it occurs that the functor $\Aut^{\otimes,\partial}(\omega)$ (resp.  $\Aut^{\otimes}(\omega)$)  is a group functor, where the composition is given by the composition of morphisms.

\smallskip
We rephrase \cite[Proposition 4.25]{GilGorchOV} in our setting:

\begin{prop}\label{prop:diffgrouprep}
Let $\cS$ be a $\partial$-$\cC$-algebra and let $\omega :\Diff(\cF,\sgq) \rightarrow
\Proj_ \cS$ be a differential fiber functor. Let $A$ be the $\cS$-Hopf algebra
that represents the functor   $\Aut^{\otimes}(\omega)$ (see \cite[Proposition 6.19]{Delgrotfest}).
Then, $A$ has a canonical structure of $\partial$-$\cS$-Hopf algebra and represents the functor
$\Aut^{\otimes,\partial}(\omega)$.
\end{prop}

Proposition \ref{prop:diffgrouprep} shows that the functor $\Aut^{\otimes,\partial}(\omega)$ is a $\partial$-group-scheme  in the sense of Appendix \ref{appendix:diffgeometry}. If $\cS$ is a $\partial$-closed field extension of $\cC$ then one can identify
$\Aut^{\otimes,\partial}(\omega)(\cS)$ with a subgroup of $\GL_\nu(\cS)$ defined as the zero set of polynomial differential equations with coefficients in $\cS$.


\section{Galois groups}
\label{sec:Galoisgroups}

Let $(F,\sgq ,\partial)$ be a $(\sgq ,\partial)$-field.
We fix a $q$-difference module $\cM_\cF$ in $\Diff(\cF,\sgq)$
and  consider three categories \emph{generated} by $\cM_\cF$.
First of all, we consider the strictly full subcategory $\langle \cM_\cF \rangle^{\oplus}$  of $\Diff(\cF,\sgq)$,
that contains
the subquotients of all finite direct sums of copies of $\cM_\cF$, \ie, is the abelian subcategory generated by
$\cM_\cF$.
Then we need the Tannakian category $\langle \cM_\cF \rangle^{\otimes}$
(resp. differential Tannakian category $\langle \cM_\cF \rangle^{\otimes, \partial}$), that
is the strictly full Tannakian (resp. differential Tannakian) category generated by $\cM_\cF$.
It admits a very simple description:  We consider the constructions of linear (resp. linear differential) algebra
of $\cM_\cF$, i.e., the list of $q$-difference modules
$$
\bigoplus_{i,j} \cM_\cF^{\otimes i}\otimes {\cM_\cF^*}^{\otimes j}
\hskip 10 pt\l(\hbox{resp. }
\bigoplus_{i,j,l,r,s} \cM_\cF^{\otimes i}\otimes {\cM_\cF^*}^{\otimes j}
\otimes F_\partial^l(\cM_\cF^{\otimes r}\otimes{\cM_\cF^*} ^{\otimes s})\r),
$$
where $\cM_\cF^*$ denotes the dual of
$\cM_\cF$ and $i,j$ are non-negative integers
(resp. $i,j,l,r,s$ are non-negative integers and
$F_\partial^l$ the $l$-th iterate of the prolongation functor).
If  we order
the sub-objects of the constructions  of linear
(resp. linear differential) algebra of $\cM_\cF$
with respect to the relation ``be a direct summand'',
then $\langle \cM_\cF \rangle^{\otimes}$
(resp. $\langle \cM_\cF \rangle^{\otimes, \partial}$)
 is the filtering union  of the abelian categories $\langle \cN_\cF \rangle^{\oplus}$,
 where $\cN_\cF$ runs through the sub-objects of a construction of linear (resp. linear differential)
 algebra of $\cM_\cF$.
 This inductive description allows to see the Tannakian as well as
the differential Tannakian equivalence as an inductive limit of
Morita  equivalences (see \cite[Lemma 2.13]{delmil}).
Thus,  for  $\cM_\cF$ a $q$-difference module and
$\omega :\langle \cM_\cF \rangle^{\otimes, \partial} \rightarrow\Proj_{\cC}$
a differential fiber functor, we denote by
$\Aut^\otimes(\cM_\cF,\omega_{|_{\langle \cM_\cF \rangle^{\otimes}}})$
and  by $\Aut^{\otimes,\partial}(\cM_\cF,\omega)$
the groups of tensor and differential tensor automorphisms of $\omega$, respectively.

\begin{notation}\label{notation:groups}
In the current notation, the group
$\Aut^\otimes(\cM_\cF,\omega)$ would be the group of tensor of automorphisms  $\omega$
as a fiber functor defined on the category $\langle \cM_\cF \rangle^{\otimes, \partial}$,
forgetting the differential structure. Since we will never use such a group, we will make
an abuse of notation writing $\Aut^\otimes(\cM_\cF,\omega)$ for
$\Aut^\otimes(\cM_\cF,\omega_{|_{\langle \cM_\cF \rangle^{\otimes}}})$.
The same abuse of notation will be applied to other groups
defined later in the text below, unless the context requires
more precision.
\end{notation}

\subsection{The forgetful functor and the intrinsic Galois groups}

Following  \cite{andreens}, we pay particular attention to the forgetful fiber functor
$$
\eta_\cF: \Diff(\cF,\sgq) \rightarrow\Vect_\cF,
$$
that sends a $q$-difference module $\cM_\cF$ onto its underlying vector space $M_\cF$.

\begin{defn}
The intrinsic (resp. parameterized intrinsic) Galois group $\Gal(\cM_\cF)$ (resp. $\Gal^\partial(\cM_\cF)$)  is the  group
$\Aut^{\otimes}(\cM_\cF, {\eta_\cF}_{|_{\langle \cM_\cF \rangle^{\otimes}}})$
(resp. $\Aut^{\otimes,\partial}(\cM_\cF, {\eta_\cF}_{|_{\langle \cM_\cF \rangle^{\otimes,\partial}}})$).
\end{defn}

The defining equations of the  intrinsic Galois groups can be  read off from the form of the $q$-difference systems attached to $\cM_\cF$ and its construction of
linear  differential algebra. Moreover, it enjoys an arithmetic description when $K=k(q)$ with $k$ a finitely generated extension of $\Q$. This arithmetic characterization  depends  on whether $q$ is algebraic or transcendental over $\Q$. See \cite[Chapter 5 and \S7.3]{DVHARDmemAMS} for an overview of the results on this topic.
{As an example,} we present here only the result under the assumption that $q$ is transcendental over $\Q$:

\begin{thm}[{\rm\cite[Theorem 4 in the Introduction and Theorem 7.13]{DVHARDmemAMS}}]
\label{thm:curvaturecharacterization}
Let $\cM_{K(x)}$ be a $q$-difference module over $K(x)$.
The parameterized intrinsic Galois group $\Gal^\partial(\cM_{K(x)})$ is the smallest
differential algebraic subgroup
of $\GL(M_{K(x)})$, whose specialization at  $\zeta_n$ contains the specialization of
the operator $\Sgq^n$ at $\zeta_n$, for almost all positive integer $n$ and for a choice of a primitive
$n$-th root of unity $\zeta_n$.
\end{thm}
For $Y(qx)=A(x)Y(x)$, the above theorem says roughly that the set of differential algebraic equations in
$K\{Z, \det Z^{-1}\}_\partial$ defining the parameterized intrinsic Galois group
is  generated by the ones  that vanishes on the \emph{curvatures} of the system, that is on
$$
A(q^{n-1}x)\cdots A(qx)A(x)\vert_{q=\zeta_n},
$$
for almost all positive integer $n$
and for a choice $\zeta_n$ of a primitive
$n$-th root of unity.

\subsection{Fiber functors associated with weak parameterized Picard-Vessiot extensions}
\label{sec:ppvext}

In this section we show that a weak parameterized Picard-Vessiot ring naturally determines a neutral differential fiber functor.
As in the theory of Tannakian categories, we expect the contrary to be also true, but the result is not included in the literature on differential Tannakian category.
In the next section we will apply this construction to any of the rings listed in \S\ref{subsec:summaryPV}.

\begin{prop}\label{prop:wpvfiber}
Let $\cM_\cF$ be  a $q$-difference module over $\cF$ and let $R$ be a weak parameterized Picard-Vessiot ring for
a $q$-difference system $\sgq(Y)=AY$ associated to $\cM_\cF$ in some fixed basis. Then,
$$
\begin{array}{cccc} \omega_R : & \langle \cM_\cF \rangle^{\otimes,\partial} & \rightarrow & \Vect_{\cC}, \\
& \cN_\cF & \mapsto & Ker( \Sgq-id, \cN_\cF \otimes_\cF R) \end{array}
$$
is a neutral differential fiber functor.
\end{prop}

\begin{proof}
{Let $\sgq(\vec{y})=A\vec{y}$ be the $q$-difference system associated to $\cM_\cF$ in a fixed basis. }
We have $R =\cF\{Z,\det Z^{-1} \}_\partial$, where $Z \in \GL_\nu(R)$ and $\sgq(Z)=AZ$.
Hence the $q$-difference system attached to $F_\partial(\cM_\cF)$ is given by $\sgq(Y) =\begin{pmatrix} A & \partial(A) \\
0& A \end{pmatrix} Y$ and a fundamental matrix is $\begin{pmatrix} Z & \partial( Z) \\
0& Z \end{pmatrix} $.
Let $i$ be a positive integer. Repeating the argument above, we can see that
\lucia{the $q$-difference module obtained from $\cM_\cF$ iterating $i$ times the
	prolongation functor} is trivialized by $R$, \ie, admits a fundamental
solution matrix with coefficients in $R$, and that more generally
$R$ trivializes any construction $\cX_\cF$ of  differential algebra of $\cM_\cF$.
This comes from the fact that a $q$-difference system (resp. fundamental solution matrix)
attached to $\mathcal{X}_\cF$ is obtained from  $A$ (resp. $Z$) by the same construction of  differential algebra. Then, it is clear that any sub-object $\cN_\cF$ of $\mathcal{X}_\cF$ admits a
fundamental solution matrix with coefficients in $R$. Thereby, for any object
$\cN_\cF$ in $\langle \cM_\cF \rangle^{\otimes,\partial}$,
we find a functorial isomorphism between $ \cN_\cF \otimes_\cF R $ and $\omega_R(\cN_\cF)\otimes_{\cC} R$. We deduce from this fact that $\omega_R$ is a faithful, exact, $\cC$-linear tensor
functor. It is neutral because $R^{\sgq} =\cC$.  The fact that
$\omega_R$ intertwines with $F_\partial$ corresponds exactly to the fact that a fundamental solution matrix attached to $F_\partial (\cM_\cF)$ is given by the prolongation of a fundamental solution matrix attached to $\cM_\cF$, as explained above.
\end{proof}

The following proposition, which  is the parameterized analogue of \cite[Theorem 1.32.2)]{vdPutSingerDifference},
shows that  the  group $G^\partial_R$ of functorial  $\sgqp$-$\cF$-automorphism of
$R=\cF\{Z,\det Z^{-1} \}_\partial$ (see Definition \ref{defn:functorautomorphisms}) corresponds to the group of differential tensor automorphisms of the
neutral differential fiber functor $ \omega_R$, constructed in Proposition \ref{prop:wpvfiber}.

\begin{prop}\label{prop:fibergrouppvgroup}
Let $\cM_\cF$ be a $q$-difference module over $\cF$ and $R$ be a weak parameterized Picard-Vessiot ring
for a $q$-difference system attached to $\cM_\cF$. Then, the linear differential algebraic groups
$\Aut^{\otimes,\partial}(\cM_\cF,\omega_R)$ and $G_R^\partial$ are isomorphic over $\cC$.
\end{prop}

\begin{proof}
\charl{We adapt  the proof of \cite[Theorem 1.32]{vdPutSingerDifference} to our parameterized setting.
Let $\mathcal{S}$ be a $\partial$-$\mathcal{C}$-algebra.
For any $q$-difference module in $\cN_\cF$ in $\langle \cM_\cF \rangle^{\otimes,\partial}$,
{the morphism $\tau_\cS \in G_R^\partial(\cS)$ acts on $\cN_\cF \otimes_\cF R \otimes_\cC \cS$ as $\mathrm{id} \otimes \tau_\cS$.}
Since this action commutes with $\Sigma_q$, {it induces} an action $\tau_{\cN_\cF}$ of $\tau_\cS$ on $\omega(\cN_\cF)\otimes \cS$.
This defines a sequence of the form $\{ \tau_{\cN_\cF} | \cN_\cF \mbox{ object of } \langle \cM_\cF \rangle^{\otimes,\partial}\}$.
Let $f:\cN_\cF \rightarrow \mathcal{V}_\cF$ be a morphism in $\langle \cM_\cF \rangle^{\otimes,\partial}$.
Then, $f$ extends to a $R\otimes \cS$-linear map
$f\otimes \mathrm{id}: \cN_\cF\otimes_\cF R \otimes_\cC \cS  \rightarrow \mathcal{V}_\cF \otimes_\cF R \otimes_\cC \cS$,
which commutes with $\Sigma_q$ and the action of $G_R^\partial(\cS)$.
Thus, $\tau_{\cV_\cF} \circ (\omega_R(f)\otimes \mathrm{id})=  (\omega_R(f)\otimes \mathrm{id}) \circ \tau_{\cN_\cF}$.
Since $\tau_\cS$ commutes with the derivation $\partial$, we have $F_\partial(\tau_{\cN_\cF})=\tau_{F_\partial(\cN_\cF)}$.
{Moreover, $\tau_{\bold{1}}$ is clearly the identity.}
This induces a functorial group homomorphism $\alpha(\cS): G_R^\partial(\cS) \rightarrow  \Aut^{\otimes,\partial}(\cM_\cF,\omega_R)(\cS)$.}
\par
\charl{Let us prove that $\alpha(\cS)$ is injective.
If $\alpha(\cS)( \tau_\cS)$ is the identity, {then in particular,} $\tau_{\cM_\cF}$ is the identity on $\omega_R(\cM_\cF)$.
{Let $(m_i)_i$ be  a $\cF$-basis of $\cM_\cF$ and let $X:=(x_{j,i})_{ 0\leq i,j \leq \nu}$ be an matrix in $\GL_\nu(R)$,
such that $(\mu_i:=\sum_j x_{j,i} m_j)_i$ is a $\cC$-basis of $\omega_R(\cM_\cF)$.
The matrix  $X\in \GL_\nu(R)$ is a fundamental solution matrix of the system associated to $\cM_{\cF}$ in the basis $(m_i)_i$,
whose  coefficients generate $R$ as $\partial$-$\cF$-algebra. Notice that $\tau_\cS(X)=X$, since $\tau_{\cM_\cF}$ acts as the identity on $\omega_R(\cM_\cF)$.
Therefore,
$\tau_\cS$ is the identity on $R\otimes \cS$, which proves that $\alpha(\cS)$ is injective.}}
\par
\charl{Conversely, let $\tau=\{\tau_{\cN_\cF} |\cN_\cF \mbox{ object of } \langle \cM_\cF \rangle^{\otimes,\partial} \}$ be an element of $\Aut^{\otimes,\partial}(\cM_\cF,\omega_R)(\cS)$. We want to construct an element $\tau_\cS \in G_R^\partial(\cS)$ such that $\alpha(\cS)(\tau_\cS)= \tau$.
Let us write $R$ as {$\cF\{X, \frac{1}{\det X}\}_\partial$}.
The action of $\tau_{\cM_\cF}$ in the $\cS$-basis $\mu_1\otimes 1, \dots,\mu_\nu \otimes 1$ is given
by an invertible matrix $[\tau]_\cS \in \GL_\nu(\cS)$. We  consider the morphism $\tau_\cS$ of $\cF$-algebra
of $R \otimes \cS$ defined as follows:  $\tau_\cS(X)=X.[\tau]_\cS$,
{$\tau_\cS( X^{(h)}) =\partial^h(X[\tau]_\cS)$ for any non-negative integer $h$}.
The  morphism $\tau_\cS$ is well defined  if  for any  differential polynomial $P$ such that  $P(X)=0$ we have $\tau_\cS(P(X))=0$.
A differential algebraic relation for the fundamental solution matrix $X$ can be seen as a $\cF$-linear form
that annihilates on a construction  $\cN_\cF$ of linear differential algebra of $\cM_\cF$.
Since the set of $\cF$-linear forms that vanish  on $\cN_\cF$ is a $q$-difference submodule
of $\cN_\cF^*$, it must be stabilized by $\tau$.  It follows
that $\tau_\cS(P(X))=0$ for any differential polynomial $P$ such that $P(X)=0$.
{One can check that} the compatibility of the sequence $\tau$ with the tensor product and the prolongation
functor implies  that $\alpha(\cS)(\tau_\cS)=\tau$.}
\par
\charl{To conclude, we have proved that for any $\partial$-$\cC$-algebra, the $\alpha(\cS)$'s are isomorphisms.
This proves that $\Aut^{\otimes,\partial}(\cM_\cF,\omega_R)$ and $G_R^\partial$ are isomorphic over $\cC$.  }
\end{proof}



As in Remark \ref{rmk:AssumptionsOnTheBaseField}, we consider a finitely generated extension $K/\Q$, an element $q\in K\smallsetminus\{0\}$ and a field embedding
$K\hookrightarrow\C$, such that $|q|\neq 1$ for the usual norm of $\C$.
Let $\cM_{K(x)}$ be a $q$-difference module over $K(x)$.

\begin{notation}\label{notation:scalarextension}
For any $q$-difference field extension $\cF/K(x)$
we will denote by $\cM_\cF$ the $q$-difference module over $\cF$ obtained
from $\cM_{K(x)}$ by scalar extension. More precisely, $M_\cF=M_{K(x)}\otimes_{K(x)} \cF$ and $\Sgq$ is defined on $\cM_\cF$ by $ \Sgq \otimes \sgq$.
\end{notation}

Let $\sgq(\vec{y})=A\vec y$ be the $q$-difference system associated to $\cM_{K(x)}$ with respect to a fixed basis.
We will consider as in \S\ref{subsec:summaryPV} the weak Picard-Vessiot rings
$R$, $R_E$ and $\wtilde R$, extending conveniently the constants to $\C$, $C_E$ and $\wtilde C_E$ respectively.
In parallel, following Proposition \ref{prop:wpvfiber}, each of these weak parameterized Picard-Vessiot rings
yields to a neutral  differential fiber functor for
$\langle\cM_{\wtilde \C_E(x)} \rangle^{\otimes, \partial}$,
$\langle\cM_{\C(x)} \rangle^{\otimes,\partial}$,
and $\langle\cM_{\C_E(x)} \rangle^{\otimes,\partial}$, respectively.
When restricted to the Tannakian category
$\langle\cM_{\wtilde \C_E(x)} \rangle^\otimes$,
$\langle\cM_{\C(x)} \rangle^\otimes$,
and $\langle\cM_{\C_E(x)} \rangle^\otimes$,
these differential fiber functors induce neutral fiber functors in the classical sense of \cite{Delgrotfest}.
Proposition \ref{prop:fibergrouppvgroup} immediately implies the following:

\begin{cor}\label{cor:PVgroupsVStannakiangroups}
We have:
\begin{itemize}
  \item $G^\partial_R\cong\Aut^{\otimes,\partial}(\cM_{\C(x)},\omega_R)$ over $\C$.
  \item $G^\partial_{R_E}\cong\Aut^{\otimes,\partial}(\cM_{\C_E(x)},\omega_{R_E})$ over $\C_E$.
  \item $G^\partial_{\wtilde R}\cong\Aut^{\otimes,\partial}(\cM_{\wtilde\C_E(x)},\omega_{\wtilde R})$ over $\wtilde\C_E$.
\end{itemize}
\end{cor}

\subsection{List of all fiber functors}
\label{subsec:listfiberfunctors}

For the reader convenience we remind the list of all neutral differential fiber functors defined above:

\begin{center}
\begin{tabular}{ll}
  $\om_R:\langle\cM_{\C(x)}\rangle^{\otimes,\partial}\longrightarrow \Vect_{\C}$,&
    $\cN\mapsto \ker(\Sgq-Id, R\otimes_{ \C(x)} \mathcal{N})$,\\&\\
  $\om_{R_E}:\langle\cM_{\C_E(x)}\rangle^{\otimes,\partial}\longrightarrow \Vect_{\C_E}$,&
    $\cN\mapsto \ker(\Sgq-Id, R_E\otimes_{ \C_E(x)} \mathcal{N})$,\\&\\
  $\om_{\wtilde R}:\langle\cM_{ \wtilde \C_E (x)}\rangle^{\otimes,\partial}\longrightarrow \Vect_{\wtilde \C_E}$,&
    $\cN\mapsto \ker(\Sgq-Id, \wtilde R\otimes_{ \wtilde \C_E(x)} \mathcal{N})$,\\
\end{tabular}
\end{center}
whose associated groups are $G^\partial_R$, $G^\partial_{R_E}$ and
$G^\partial_{\wtilde R}$.
Moreover we have the four forgetful functors:
\begin{center}
\begin{tabular}{l}
$\eta_{K(x)}:\langle\cM_{K(x)}\rangle^{\otimes,\partial}\longrightarrow \Vect_{K(x)}$,\\\\
$\eta_{\C(x)}:\langle\cM_{\C(x)}\rangle^{\otimes,\partial}\longrightarrow \Vect_{\C(x)}$,\\\\
$\eta_{\C_E(x)}:\langle\cM_{\C_E(x)}\rangle^{\otimes,\partial}\longrightarrow \Vect_{\C_E(x)}$,\\\\
$\eta_{\wtilde \C_E (x)}:\langle\cM_{\wtilde \C_E (x)}\rangle^{\otimes,\partial}\longrightarrow \Vect_{\wtilde \C_E}$,\\
\end{tabular}
\end{center}
that define the intrinsic Galois groups
$\Gal^\partial(\cM_{K(x)})$, $\Gal^\partial(\cM_{\C(x)})$,
$\Gal^\partial(\cM_{\C_E(x)})$ and $\Gal^\partial(\cM_{\wtilde C_E(x)})$, respectively.
We will call by the same name the restrictions of the \lucia{functors} above to the usual Tannakian categories
$\langle\cM_{K(x)} \rangle^\otimes$,
$\langle\cM_{\C(x)} \rangle^\otimes$,
$\langle\cM_{\C_E(x)} \rangle^\otimes$,
and $\langle\cM_{\wtilde \C_E(x)} \rangle^\otimes$. Using  Notation \ref{notation:groups} for the groups, i.e., dropping the superscript $\partial$,
we obtain the following Tannakian groups:
$\Gal(\cM_{K(x)})$, $\Gal(\cM_{\C(x)})$,
$\Gal(\cM_{\C_E(x)})$, $\Gal(\cM_{\wtilde C_E(x)})$, respectively.
One can consider the difference Galois groups $G_R$, $G_{R_E}$, $G_{\wtilde R}$, {defined in Remark \ref{rmk:classicalPV}}.
Notice that the analogue of Corollary \ref{cor:PVgroupsVStannakiangroups} for $G_R$, $G_{R_E}$, $G_{\wtilde R}$
is well known (see \cite[\S9.4]{Delgrotfest}).

\section{Comparison theorems}

One of the main results of \cite[\S 3]{ChatHardouinSinger}  is
(see also \cite{peonnietodiffcomp}, for a model theoretic approach):

\begin{thm}\label{thm:compalgpv}
The group schemes $G_R$,  $G_{R_E}$ and
$G_{\wtilde R}$
become isomorphic over $\wtilde \C_{E}$.
\end{thm}

\begin{rmk}
In \cite{Sauloy-Galois-AENS}, Sauloy constructs a $\C$-linear fiber functor for $q$-difference modules over $\C(x)$, using a basis of meromorphic solutions.
Since $\C$ is algebraically closed, it follows from the classical general theory of Tannakian categories, that such a fiber
functor gives rise to a group that is isomorphic to the Picard-Vessiot group of \cite{vdPutSingerDifference} over $\cF=\C(x)$.
We won't consider Sauloy's point of view in this paper.
\end{rmk}

One of the most important properties of functional Galois groups is that their dimension as algebraic variety is equal to the transcendence degree of the associated Picard-Vessiot rings. In particular, the {sets of} the entries of any fundamental solution matrix of \eqref{eq:sys} in $R$, $R_E$ or $\wtilde R$ have the same transcendence degree over the associated base field, i.e., $\C(x)$, $C_E(x)$ or $\wtilde\C_E(x)$, respectively.
\par
We have the following differential analogue of the Theorem above:

\begin{thm}\label{thm:weakextension}
In the previous notation, $G^\partial_R\otimes_\C \wtilde \C_E \simeq
G^\partial_{R_E}\otimes_{\C_E} \wtilde\C_E \simeq
G^\partial_{\wtilde R}$.
\end{thm}

\begin{rmk}
\lucia{The proof below is a parameterized analog of  \cite[Corollary 2.5]{ChatHardouinSinger}.}
\end{rmk}

\begin{proof}
\charl{Let
$S:= \C_E(x)\{Y, \det Y^{-1} \}_\partial/
\mathfrak{q}\C_E(x)$ be the PPV ring over $\C_E(x)$ defined  as in Corollary \ref{cor:isoppv} and let $\phi:R\otimes_\C \C_E \rightarrow S$ be the embedding considered in the proof of Proposition \ref{prop:pvextell}.  The group $G^\partial_R$ is a functor from $\partial$-$\C$-algebras $A$ to groups defined
by $G^\partial_R(A)=\Aut_{\C(x)\otimes A}^{\sgqp}(R \otimes_\C A)$.  We define analogously $G^\partial_S$ as a functor from $\partial$-$\C_E$-algebras to groups. By Proposition \ref{prop:diffgrouprep}, these functors are representable. {(See Appendix \ref{appendix:diffgeometry}.)}
Let $T_R$ be the finitely generated $\partial$-$\C$-algebra representing
$G^\partial_R$ and let  $T_S$ be the finitely generated $\partial$-$\C_E$-algebra representing
$G^\partial_S$.  We define a new functor $F$ from $\partial$-$\C_E$-algebras $B$ to groups as $F(B) =\Aut_{\C_E(x) \otimes B}^{\sgqp}((R \otimes_\C \C_E)\otimes_{\C_E} B)$. One can easily check that $F$ is representable by $T_R\otimes_\C \C_E$. Using the embedding $\phi$, one sees that
\begin{equation}
F(B)=\Aut_{\C_E(x) \otimes B}^{\sgqp}(S\otimes_{\C_E} B)= G_R(B),
\end{equation}
for any $\partial$-$\C$-algebra $B$. Yoneda Lemma {(see Appendix \ref{appendix:diffgeometry})}
yields to $T_R \otimes_\C \C_E  \simeq T_S$, which is $G^\partial_R \otimes_\C \C_E \simeq G^\partial_S$.
A similar {argument} shows that the isomorphism  of $   \sgqp$-$\widetilde \C_E$-algebras between $S\otimes_{\C_E} \wtilde \C_E$ and $R_E\otimes_{\C_E} \wtilde \C_E$  yields to the isomorphism   $G^\partial_S\otimes_\C \wtilde \C_E \simeq
G^\partial_{R_E}\otimes_{\C_E} \wtilde\C_E$. This proves that $G^\partial_R\otimes_\C \wtilde \C_E \simeq
G^\partial_{R_E}\otimes_{\C_E} \wtilde\C_E$.}
\par
\lucia{Replacing $S$ with $\widetilde S$ (see Corollary \ref{cor:isoppv}), one shows in the same way
that $G^\partial_R\otimes_\C \wtilde \C_E \simeq
G^\partial_{ \widetilde R}\otimes_{\C_E} \wtilde\C_E$.}
\end{proof}

\begin{rmk}
By \cite[Prop. 6.21]{HardouinSinger},  the  Zariski closure of $G^\partial_R$, $G^\partial_{R_E}$ and
$G^\partial_{\wtilde R}$ coincide with $G_R$, $G_{R_E}$ and
$G_{\wtilde R}$, {respectively. Therefore} we can retrieve the Theorem \ref{thm:compalgpv} as a corollary of Theorem \ref{thm:weakextension}.
\end{rmk}


We are now concerned with the intrinsic Galois groups,
{both parameterized and not}.
Let $\cM_{K(x)}$ be a $q$-difference module defined over $K(x)$,
with $K$ a finite generated extension of $\Q$.
For a $q$-difference module $\cM_\cF$ over $\cF$, the  comparison between the  intrinsic Galois group  and  the group of tensor automorphism of a  neutral fiber functor $\omega$ for $\langle \cM_\cF \rangle^\otimes$ is a direct consequence of the fact that   $\Hom^{\otimes}(\omega, \eta_\cF)$, which is a bitorsor on $\Aut^{\otimes}(\cM_\cF,\omega)$ and $\Gal(\cM_\cF)$, is also an $\cF$-scheme and has therefore a point in some algebraically closed extension $\widetilde{\cF}$ of $\cF$. This point gives rise to an isomorphism over $\widetilde{\cF}$ between $\Aut^{\otimes}(\cM_\cF,\omega)$ and $\Gal(\cM_\cF)$.   A similar result holds in the differential parameter context.  More, precisely, \cite[Proposition 4.25]{GilGorchOV}) shows that, when $\omega$ is a neutral differential fiber functor for  $\langle \cM_\cF \rangle^{\otimes,\partial}$, the functor  $\Hom^{\otimes,\partial}(\omega, \eta_\cF)$ is a $\partial$-$\cF$-scheme. As above, this yields an isomorphism between $\Aut^{\otimes,\partial}(\cM_\cF,\omega)$ and $\Gal^\partial(\cM_\cF)$ over a differentially closed field extension of $\cF$. In our $q$-difference setting,
this leads to the follwing statement:

\begin{prop} \label{prop:compfibreoubli}
 Let us denote by $\wtilde{\C(x)}$ (resp. $\wtilde{\C_E(x)}$) a differential closure of $\C(x)$ (resp. $\C_E(x)$).
 We have the following isomorphisms of group-schemes:
\begin{enumerate*}
\item
$\Aut^{\otimes}(\cM_{\C(x)}, \omega_R) \otimes_{\C}\wtilde{\C(x)}  \simeq
\Gal(\cM_{\C(x)}) \otimes_{\C(x)} \wtilde{\C(x)}$;
\item $ \Aut^{\otimes}(\cM_{\C_E(x)}, \om_{R_E}) \otimes_{\C_E} \wtilde{\C_E(x)} \simeq \Gal(\cM_{\C_E(x)}) \otimes_{\C_E(x)}\wtilde{\C_E(x)}$;
\end{enumerate*}
and the following isomorphisms of $\partial$-group-schemes:
\begin{enumerate*}
\item[1bis.]
$\Aut^{\otimes,\partial}(\cM_{\C(x)}, \omega_R) \otimes_{\C}\wtilde{\C(x)}  \simeq
\Gal^\partial(\cM_{\C(x)}) \otimes_{\C(x)} \wtilde{\C(x)}$;
\item[2bis.]
$ \Aut^{\otimes,\partial}(\cM_{\C_E(x)}, \om_{R_E}) \otimes_{\C_E} \wtilde{\C_E(x)} \simeq \Gal^{\partial}(\cM_{\C_E(x)}) \otimes_{\C_E(x)} \wtilde{\C_E(x)}$.
\end{enumerate*}
\end{prop}

Since the dimension of a $\partial$-group-scheme as well as the differential
transcendence degree (see Definition \ref{defn:differential transcendence degree})
of a field extension is stable up to field extension, one obtains the following corollary:

\begin{cor}\label{cor:caldifftrans}
Let $\cM_{K(x)}$ be a $q$-difference module defined over $K(x)$. Let $U \in \GL(\cM  (\C^\ast))$
be a fundamental solution matrix {attached to $\cM_{K(x)}$}, as in Proposition \ref{thm:praagman}.

Then, the differential transcendence degree of the differential  field $F_E$ generated  over  $ \C_E (x)$
by the entries of $U$
is equal to the differential dimension of $\Gal^{\partial}(\cM_{ \C (x)})$ over $\C(x)$.
\end{cor}

\begin{proof}
By \cite[Proposition 4.28]{GilGorchOV}, the functor $\Isom^{\otimes,\partial}(\omega_{R_E}\otimes \C_E(x),\eta_{\C_E(x)})$ is a reduced $\partial$-$\C_E(x)$-scheme, represented by $R_E$. It is also a $\Aut^{\otimes,\partial}(\cM_{\C_E(x)}, \om_{R_E})$-torsor. It has thus a $\wtilde{\C_E(x)}$-point, which
gives, by triviality of the torsor,   a   $\sgqp$-isomorphism between
$\wtilde{ \C_E(x)} \otimes_{\C_E(x)} R_E $ and $\wtilde {\C_E(x)} \otimes_{\C_E} \C_E\{\Aut^{\otimes,\partial}(\cM_{\C_E(x)}, \om_{R_E}) \}$. Using  the discussion on the differential dimension in Appendix \ref{appendix:diffgeometry}, we get that the differential dimension of
$\Aut^{\otimes,\partial}(\cM_{\C_E(x)}, \om_{R_E})$ equals the differential transcendence degree of $F_E$ over $\C_E(x)$. By  Proposition \ref{prop:compfibreoubli} combined with Proposition \ref{thm:weakextension}, we find
that $\Aut^{\otimes,\partial}(\cM_{\C_E(x)}, \om_{R_E})$ is isomorphic to $\Gal^{\partial}(\cM_{\C
(x)})$ over $\wtilde \C_E(x)$. We conclude by using one more time the fact that the
differential dimension of a reduced $\partial$-scheme is invariant by base field extension.
\end{proof}

In \cite[Lemma 1.3.2]{katzcal}, it is shown that the group of tensor
automorphisms of a $K$-linear neutral fiber functor is invariant up to algebraic field
extension of $K$. For forgetful functors, this is not true.
This is essentially due to the fact that, unlike to the case of neutral fiber functors,
a vector space stable under the action of the group of tensor automorphisms of the forgetful functor is  not necessarily an
object of the Tannakian category.
{However, one can show that, for any field extension $L/K$, the
parameterized intrinsic Galois group of $\cM_{L(x)}$ is equal, up to
scalar extension}, to the parameterized intrinsic Galois group of
$\cM_{K^\p(x)}$, for a {suitable} finitely generated extension
$K^\p/K$, with $K^\p\subset L$.\footnote{In \cite{barkatou-cluzeau-divizio-weil-reduced},
for differential modules, the authors optimize the field
on which such an isomorphism is true, using an effective characterization of Kolchin's reduced forms.}

\begin{lemma}\label{lemma:reduccourb}
Let $L$ be a field extension of $K$ with $\sgq{|_{L}}= id$. There
exists a finitely generated intermediate field $L/K^\p/K$ such that
$$
\Gal(\cM_{L(x)}) \cong \Gal(\cM_{K'(x)}) \otimes_{K'(x)} L(x)\, $$
 and
 $$
\Gal^{\partial}(\cM_{L(x)}) \cong \Gal^{\partial}(\cM_{K'(x)}) \otimes_{K'} L(x).
$$
These equalities hold when we replace $K^\p$ by
any subfield extension of $L$ containing $K^\p$.
\end{lemma}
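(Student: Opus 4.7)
The plan is to combine the Chevalley-type realization of Galois groups as stabilizers of lines in suitable tannakian constructions with a standard descent argument: the construction and the line only involve finitely many coefficients from $L$, so they descend to a finitely generated intermediate field $K^\p$, and the resulting stabilizer over $K^\p(x)$ can be identified with the generic Galois group there. I treat the differential case explicitly; the non-differential case is entirely parallel.

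First, apply the differential version of Chevalley's theorem (\cite[Proposition 14]{cassdiffgr}, \cite[Theorem 5.1]{ChevOv}) to write $Gal^\partial(\cM_{L(x)},\eta_{L(x)})$ as the stabilizer of a line $D_{L(x)}$ in some object $\cW_{L(x)}$ of $\langle\cM_{L(x)}\rangle^{\otimes,\partial}$. The object $\cW_{L(x)}$ is built from $\cM_{L(x)}$ by a finite chain of tannakian operations (subquotients, tensor products, duals, prolongations), each specified by finitely many linear equations; together with the finitely many equations cutting out $D_{L(x)}$, this data involves only finitely many elements $c_1,\dots,c_r\in L$. Setting $K^\p := K(c_1,\dots,c_r)$, both $\cW_{L(x)}$ and $D_{L(x)}$ descend to an object $\cW_{K^\p(x)} \in \langle\cM_{K^\p(x)}\rangle^{\otimes,\partial}$ and to a line $D_{K^\p(x)} \subset \cW_{K^\p(x)}$ such that $\cW_{K^\p(x)} \otimes_{K^\p(x)} L(x) = \cW_{L(x)}$ and $D_{K^\p(x)} \otimes_{K^\p(x)} L(x) = D_{L(x)}$.

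Next, let $H_0 \subseteq \GL(M_{K^\p(x)})$ denote the closed differential algebraic subgroup stabilizing $D_{K^\p(x)}$ in $\cW_{K^\p(x)}$. Since taking stabilizers commutes with faithfully flat base change, $H_0 \otimes_{K^\p(x)} L(x) = Gal^\partial(\cM_{L(x)},\eta_{L(x)})$. The crux of the proof is to identify $H_0$ with $Gal^\partial(\cM_{K^\p(x)}, \eta_{K^\p(x)})$. On the one hand, $Gal^\partial(\cM_{K^\p(x)}, \eta_{K^\p(x)})$ acts functorially on every object of $\langle\cM_{K^\p(x)}\rangle^{\otimes,\partial}$ and hence stabilizes $D_{K^\p(x)}$, giving $Gal^\partial(\cM_{K^\p(x)}, \eta_{K^\p(x)}) \subseteq H_0$. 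On the other hand, functoriality of the tannakian fundamental group under scalar extension yields a closed immersion $Gal^\partial(\cM_{L(x)}, \eta_{L(x)}) \hookrightarrow Gal^\partial(\cM_{K^\p(x)}, \eta_{K^\p(x)}) \otimes_{K^\p(x)} L(x)$, which reads as $H_0 \otimes L(x) \subseteq Gal^\partial(\cM_{K^\p(x)}, \eta_{K^\p(x)}) \otimes L(x)$; faithful flatness of $K^\p(x) \subset L(x)$ then gives $H_0 \subseteq Gal^\partial(\cM_{K^\p(x)}, \eta_{K^\p(x)})$.

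The non-differential case is handled identically, using $\langle\cdot\rangle^{\otimes}$ and the classical Chevalley theorem in place of their differential analogues. For the stability under enlarging the intermediate field, observe that for any $K^\p \subseteq K^{\p\p} \subseteq L$ the descended data $D_{K^\p(x)} \subset \cW_{K^\p(x)}$ is a fortiori defined over $K^{\p\p}(x)$, so the same argument with $K^{\p\p}$ in place of $K^\p$ yields the required isomorphism. I expect the main obstacle to be the functorial inclusion $Gal^\partial(\cM_{L(x)}, \eta_{L(x)}) \hookrightarrow Gal^\partial(\cM_{K^\p(x)}, \eta_{K^\p(x)}) \otimes L(x)$, which formalizes the fact that scalar extension can only add tannakian constraints (new subobjects may appear over $L(x)$); although standard in the differential tannakian formalism (\cf \cite{DifftanOv}), it must be invoked carefully.
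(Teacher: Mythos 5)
Your overall strategy (realize $Gal^{\partial}(\cM_{L(x)},\eta_{L(x)})$ as the stabilizer of a single line via the differential Chevalley theorem, descend the line and the ambient object to a finitely generated subfield $K^\p$, then identify the descended stabilizer $H_0$ with $Gal^{\partial}(\cM_{K^\p(x)},\eta_{K^\p(x)})$ by two opposite inclusions) is close in spirit to the paper, which instead works with the description of the generic group as the simultaneous stabilizer of \emph{all} subobjects of (differential) constructions and invokes noetherianity to keep only finitely many of them. Note that the inclusion $Gal^{\partial}(\cM_{L(x)},\eta_{L(x)})\subset Gal^{\partial}(\cM_{K^\p(x)},\eta_{K^\p(x)})\otimes L(x)$, which you single out at the end as the expected main obstacle, is in fact the unproblematic half: it holds for \emph{every} intermediate field, because every subobject over $K^\p(x)$ extends to one over $L(x)$, and it is exactly how the paper's proof begins.

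The genuine gap is in the other inclusion, $Gal^{\partial}(\cM_{K^\p(x)},\eta_{K^\p(x)})\subseteq H_0$. You justify it by saying that the generic group ``acts functorially on every object of $\langle\cM_{K^\p(x)}\rangle^{\otimes,\partial}$ and hence stabilizes $D_{K^\p(x)}$''. Acting on the object $\cW_{K^\p(x)}$ does not imply stabilizing an arbitrary line inside it: if that principle were valid, the generic group would stabilize every line of every construction and hence be contained in the scalars, which is absurd. The generic group is only guaranteed to stabilize those subspaces that are subobjects, \ie $\Sgq$-stable sublattices of constructions (\cf Proposition \ref{prop:genchar}). So the missing point is precisely that the Chevalley line is such a subobject: since $D_{L(x)}$ is stable under $Gal^{\partial}(\cM_{L(x)},\eta_{L(x)})=Stab(D_{L(x)})$, the correspondence between $Gal$-stable subspaces and subobjects attached to the forgetful functor (\cf \cite[3.2.2]{andreens} and its differential analogue) shows that $D_{L(x)}$ is a rank-one subobject of $\cW_{L(x)}$, in particular $\Sgq$-stable; once $K^\p$ contains its coefficients, its $K^\p(x)$-form is a subobject over $K^\p(x)$ and is therefore stabilized by $Gal^{\partial}(\cM_{K^\p(x)},\eta_{K^\p(x)})$. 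With this inserted your argument closes (the same caveat applies when you enlarge $K^\p$ to a bigger subfield of $L$). Alternatively one can bypass Chevalley entirely, as the paper does: the group over $L(x)$ is the intersection of the stabilizers of all subobjects of constructions, by noetherianity finitely many suffice, each of them is $\Sgq$-stable and defined over a finitely generated $K^\p$, and their $K^\p(x)$-forms are subobjects over $K^\p(x)$, which yields both inclusions at once.
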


\begin{proof}
By definition, $\Gal^{\partial}(\cM_{L(x)})$
is the stabilizer inside
$\GL(M_{L(x)})$ of all $L(x)$-vector spaces of the form $W_{L(x)}$ for
$\cW$ object of
$\langle\cM_{L(x)}\rangle^{\otimes,\partial}$.

Similarly, for any field extension $L/K^\p/K$, we have an equality
$$
\Gal^{\partial}(\cM_{K^\p(x)}) =Stab(W_{K^\p(x)},\cW \; \mbox{object of } \; \langle\cM_{K^\p(x)} \rangle^{\otimes,\partial}),
$$
that has to be understood as
a functorial equality for differential scheme defined above $L(x)$.
Then,
$$
\Gal^{\partial}(\cM_{L(x)}) \subset \Gal^{\partial}(\cM_{K^\p(x)}) \otimes L(x).$$
By noetherianity, the (parameterized) intrinsic Galois group of $ \cM_{L(x)}$ is defined
by a finite family of (differential) polynomial equations, thus we can choose
$K^\p$, which contains the coefficients of the defining equations.
\end{proof}

The corollary below summarizes results of this chapter.

\begin{cor}\label{cor:diffalgrelation}
Let $\cM_{K(x)}$ be a $q$-difference module defined over $K(x)$. Let $ U \in \GL_{\nu} (\cM(C^\ast))$ be a fundamental matrix of meromorphic solutions of $\cM_{K(x)}$.
Then,
\begin{enumerate*}
\item
the dimension of $\Gal(\cM_{\C(x)})$ is equal to the transcendence degree of the field generated by the
entries of $U$ over  $\C_E (x)$, \ie, the $\C(x)$-group-scheme
$\Gal(\cM_{\C(x)})$ measures the algebraic relations between the meromorphic solutions of $\cM_{\C_E(x)}$.
\item
the differential dimension of  $\Gal^{\partial}(\cM_{C(x)})$ is equal to the
differential transcendence degree of the differential field generated by the entries of $U$ over  $\wtilde \C_E (x)$, \ie, the
$\partial$-$\C(x)$-group-scheme  $\Gal^{\partial}(\cM_{\C(x)})$ encodes the differential algebraic relations
between the meromorphic solutions of $\cM_{K(x)}$.

\item
there exists a finitely generated extension $K^\p/K$ such that the
differential transcendence degree of
the differential field generated by the entries of $U$ over  $\wtilde{\C}_E (x)$ is equal to the differential dimension of
$\Gal^{\partial}(\cM_{K^\p(x)})$, \ie, it is given by an arithmetic characterization (see Theorem \ref{thm:curvaturecharacterization}).
\end{enumerate*}
\end{cor}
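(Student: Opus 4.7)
The plan is to assemble the statement from the string of comparison results already proved, chaining equalities of dimensions (respectively differential dimensions) across the four fields $K(x)$, $C(x)$, $C_E(x)$, $\wtilde C_E(x)$. All three items follow the same pattern: one starts from the meromorphic side (transcendence degree of the field of entries of $U$), crosses over to the Picard--Vessiot world via a fiber functor of meromorphic solutions, and then jumps to the generic Galois group via a bitorsor/forgetful-functor identification, finishing with a descent to the smallest possible base field. Since dimension and differential dimension of a (differential) linear algebraic group are invariant under base field extension, all scalar extensions in the chain are harmless.

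For part (2), which I view as the heart of the corollary, I would proceed as follows. Theorem \ref{thm:compagalfibre}(2) identifies the $\partial$-differential transcendence degree over $\wtilde C_E(x)$ of the field generated by the entries of $U$ with the differential dimension over $\wtilde C_E$ of $Aut^{\otimes,\partial}(\cM_{\wtilde C_E(x)},\wtilde\om_E)$. By Theorem \ref{thm:compagalfibre}(1) this group is the $\wtilde C_E$-scalar extension of $Aut^{\otimes,\partial}(\cM_{C_E(x)},\om_E)$, so it has the same differential dimension. Next, Proposition \ref{prop:compfibreoubli}(3) (the differential bitorsor isomorphism over $F_M$) yields the identity
\[
\dim^{\partial} Aut^{\otimes,\partial}(\cM_{C_E(x)},\om_E)
= \dim^{\partial} Gal^{\partial}(\cM_{C_E(x)},\eta_{C_E(x)}).
\]
Finally, Proposition \ref{prop:complietrans} gives the isomorphism $Gal^{\partial}(\cM_{C_E(x)},\eta_{C_E(x)})\cong Gal^{\partial}(\cM_{C(x)},\eta_{C(x)})\otimes C_E(x)$, so the differential dimensions agree, producing the equality claimed in (2).

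Part (1) is the purely algebraic shadow of the same argument: Corollary \ref{cor:caldifftrans} is stated for differential dimension, but its proof passes through Proposition \ref{prop:compfibreoubli}(2), which is the non-parameterized version of (3). So the dimension of $Aut^{\otimes}(\cM_{C_E(x)},\om_E)$ equals $\mathrm{trdeg}_{C_E(x)}F_M$, since this group is the classical Picard--Vessiot group of $R_M$ (via Theorem \ref{thm:diffclosedpv} applied without the derivation) whose dimension is the transcendence degree of its function field. Combining with Proposition \ref{prop:compfibreoubli}(2) and the second half of Proposition \ref{prop:complietrans} (the ordinary Galois group descent from $C_E(x)$ to $C(x)$) gives (1).

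For part (3), I would apply Lemma \ref{lemma:reduccourb} to the extension $C/K$ (with $\sgq|_C=\mathrm{id}$ once one restricts to the constants of $C$ for the derivation-free statement, or directly to $C(x)/K(x)$ for the differential one): it produces a finitely generated intermediate field $K'/K$ with $K'\subset C$ such that
\[
Gal^{\partial}(\cM_{C(x)},\eta_{C(x)})\cong Gal^{\partial}(\cM_{K'(x)},\eta_{K'(x)})\otimes_{K'(x)} C(x).
\]
Since differential dimension is invariant under base change, part (2) then yields (3). The main obstacle in writing this out is purely bookkeeping: one must verify at each step that the equality under consideration is compatible with the scalar extensions being performed (in particular, that the differential transcendence degree of the entries of $U$ is independent of whether one takes it over $C_E(x)$ or $\wtilde C_E(x)$, which follows from $\wtilde C_E$ being differentially algebraic over $C_E$), and that the two independent Galois groups attached to $\om_E$ and to $\eta_{C_E(x)}$ are related precisely by Proposition \ref{prop:compfibreoubli}, with no hidden scalar extension other than the one to $F_M$.
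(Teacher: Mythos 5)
Your proposal is correct and follows essentially the same route as the paper: parts (1) and (2) chain Theorem \ref{thm:compagalfibre} and Proposition \ref{prop:compfibreoubli} (i.e.\ Corollary \ref{cor:caldifftrans}) with the descent of Proposition \ref{prop:complietrans}, and part (3) is exactly Lemma \ref{lemma:reduccourb} applied with $L=C$ combined with part (2). Your extra remarks (invariance of differential dimension under scalar extension, $\wtilde C_E$ being differentially algebraic over $C_E$) only make explicit what the paper leaves implicit.
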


\begin{proof}
The first two statements are proved in Corollary \ref{cor:caldifftrans}.
The third one is Lemma \ref{lemma:reduccourb}.
\end{proof}


\Appendix

We quickly recall some basic facts of differential
algebra as well as some very basic notions of differential algebraic geometry,
mainly in the affine case. We refer to \cite{kolchin-diff-algebra} and \cite{Kovdiffscheme} for a detailed exposition.

\section{Differential algebra}\label{app:diffalgebra}

We largely use standard notation of  differential algebra as can
be found in  \cite{kolchin-diff-algebra}.
A differential ring (or $\partial$-ring for short) is a ring $R$ together with a derivation $\partial:R\rightarrow R$,
\ie, {an additive} map  $\partial : R \rightarrow R$ satisfying the Leibniz rule
$\partial(ab)=\partial(a)b + a \partial(b)$, for all $(a,b) \in R^2$. The ring of $\partial$-constants of $R$ is $R^\partial=\{r\in R| \ \partial(r)=0\}$.
All rings considered in this work are commutative with identity and
all \emph{differential}  rings contain the ring of integer numbers.
In particular, all fields are of characteristic zero.
\par
Given two $\partial$-rings $(R,\partial)$ and ($R^\p,\partial^\p)$,
a morphism $\psi:R\rightarrow R'$ of $\partial$-rings is a morphism of rings such that $\psi\partial=\partial'\psi$.

A $\partial$-ideal $\mathfrak I$ of a $\partial$-ring
$R$ is an ideal of $R$ that is invariant under the action of $\partial$. A $\partial$-ring $R$ is said to be $\partial$-simple if it does not contain any non-zero proper $\partial$-ideals.

A $\partial$-field $k$ is a field that is also a $\partial$-ring. A $\partial$-$k$-algebra
$R$ is a $k$-algebra and a $\partial$-ring such that the morphism $k\to R$ is a morphism of $\partial$-rings. Given two $\partial$-$k$-algebras $(R,\partial)$ and ($R^\p,\partial^\p)$,
a morphism $\psi:R\rightarrow R'$ of $\partial$-$k$-algebras is a morphism of $k$-algebras such that $\psi\partial=\partial'\psi$.
If, moreover,  $R$ is a $\partial$-field and a $\partial$-$k$-algebra, we say that $R|k$ is a  $\partial$-field extension.

Let $k$ be a $\partial$-field and $R$ a $\partial$-$k$-algebra.
If $B$ is a subset of $R$,
then $k\{B\}_\partial$ denotes the smallest $\partial$-$k$-subalgebra of $R$ that contains $B$.
If $R=k\{B\}_\partial$ for some finite subset $B$ of $R$, we say that $R$ is finitely $\partial$-generated over $k$.
If $K|k$ is an extension of $\partial$-fields and $B \subset K$, then $k\left<B\right>_\partial$ denotes the smallest $\partial$-field
extension of $k$ inside $K$ that contains $B$.

\begin{defn}\label{defn:differential-polynomials}
The $\partial$-$k$-algebra $k\{x\}_\partial=k\{x_1,\ldots,x_n\}_\partial$ of $\partial$-polynomials over $k$
in the $\partial$-variables $x_1,\ldots,x_n$ is the polynomial ring over $k$ in the
countable set of algebraically independent variables $x_1,\ldots,x_n,\partial(x_1),\ldots,\partial(x_n),\ldots,$ with an action of $\partial$
as suggested by the names of the variables.
\end{defn}

Of course, for any $\partial$-field
extension $L|k$ and any  $f:=(f_1,\dots,f_n) \in L^n$, one has a $\partial$-$k$-morphism from
$k\{x\}_\partial$ to $L$, which assigns $x_i$ to $f_i$, for all $i=1,\dots,n$.
We say that $f$ is a solution of the differential algebraic equation $P(x)=0$, for some
$P \in k\{x\}_\partial$, if $P$ lies in the kernel of the specialization morphism above.

\begin{defn}\label{defn:differentially-closed-field}
A $\partial$-field $k$ is called differentially closed or $\partial$-closed, for short,
if  any system of differential algebraic equations with coefficients in $k$, having a solution in some
differential field extension of $k$, has a solution in $k$.
A differential closure of a $\partial$-field $k$
is a $\partial$-field extension of $k$ that is  $\partial$-closed and  that embeds, as $\partial$-field extension of $k$, in any differentially closed extension of $k$.
\end{defn}

\begin{defn}\label{defn:differential transcendence degree}
Let $L|K$ be a $\partial$-field extension. Elements $a_1,\ldots,a_n\in L$ are called {differentially
(or $\partial$-algebraically) independent over $K$} if the elements $a_1,\ldots,a_n,\partial(a_1),\ldots,\partial(a_n),\ldots$
are algebraically independent over $K$. Otherwise, they are called differentially dependent over $K$.
\par
{A {$\partial$-transcendence basis of $L$ over $K$} is a subset of $L$, which is maximal with respect to the property of being  a differentially independent set over $K$.}
\par
Any two $\partial$-transcendence basis of $L|K$ have the same cardinality and so we can define the
{$\partial$-transcendence degree of $L|K$}
(or differential transcendence degree of $L|K$, when the choice of $\partial$ is clear,
or also $\partial$-$ \operatorname{trdeg}(L|K)$, for short)
as the cardinality of any $\partial$-transcendence basis of $L$ over $K$.
\end{defn}

\section{Differential geometry}\label{appendix:diffgeometry}
\label{sec:diffalgeo}

In this paper, we work with the formalism of affine  differential group schemes, as can be found
 in \cite{Kovdiffscheme}. In this section, we fix  a $\partial$-field $k$ of characteristic zero,
 not necessarily $\partial$-closed.
 We define a $\partial$-$k$-scheme as follows:

\begin{defn}
An affine  \lucia{$\partial$-scheme over $k$ (or an affine $\partial$-$k$-scheme, for short)} is a (covariant) functor from the category
of $\partial$-$k$-algebras to the category of sets which is representable.
\end{defn}

The definition above means that a functor $X$ from the category of $\partial$-$k$-algebras to the category of sets is a $\partial$-$k$-scheme
if and only if there exists a $\partial$-$k$-algebra $k\{X\}$ and an isomorphism of functors
$ X\simeq \Alg_k^\partial(k\{X\},-), $
where $\Alg_k^\partial$ stands for morphism of $\partial$-$k$-algebras. By the Yoneda lemma, the $\partial$-$k$-algebra $k\{X\}$ is uniquely determined up to unique $\partial$-$k$-isomorphisms. We call it the { ring of $\partial$-coordinates of $X$}. A $\partial$-$k$-scheme $X$ is called {$\partial$-algebraic} (over $k$) if $k\{X\}$ is finitely $\partial$-generated over $k$.
We say that a $\partial$-$k$-scheme $X$ is {reduced} if $k\{X\}$ has no non-zero nilpotent elements.

Let $X$ be a $\partial$-$k$-scheme. By a {closed $\partial$-$k$-subscheme $Y\subset X$}
we mean a subfunctor $Y$ of $X$ which is represented by $k\{X\}/\I(Y)$ for some $\partial$-ideal $\I(Y)$ of $k\{X\}$.
The ideal $\I(Y)$ of $k\{X\}$ is uniquely determined by $Y$ and vice versa. We call it the {vanishing ideal of $Y$} in $X$.

A morphism of $\partial$-$k$-schemes is a morphism of functors. If $\phi\colon X\to Y$ is a morphism
of $\partial$-$k$-schemes, we denote the dual morphism of $\partial$-$k$-algebras with $\phi^*\colon k\{Y\}\to k\{X\}$.

If a functor (resp. $\partial$-functor) $X$ factors through the category of group, we say that $X$ is
a {$k$-group-scheme (resp.  $\partial$-$k$-group-scheme)}. We denote by $\mathrm{GL}_\nu(k)$ the $\partial$-$k$-groupscheme attached to  the general linear group of size $\nu$ over $k$.  It is represented by
the $\partial$-$k$-algebra $k\l\{X,\det X^{-1}\r\}_\partial$ where $X$ is a $\nu \times \nu$ matrix of $\partial$-indeterminates. More generally, for any $k$-vector space $V$ of finite dimension, we denote by $\mathrm{GL}(V)$ the $\partial$-$k$-groupscheme of invertible $k$-linear automorphisms of $V$.
{Notice that we are calling $\mathrm{GL}_\nu(k)$ both the $k$-groupscheme and the $\partial$-$k$-groupscheme attached to  the general linear group,
anyway the context will always make clear to which one of the two structures we are referring to, without introducing complicate notation.}
\par
By a {$\partial$-subgroup $H$ of $G$}, we mean a  $\partial$-$k$-scheme $H$ \charl{such that $H(S)$ is a subgroup of $ G(S)$ for  every $\partial$-$k$-algebra $S$}. We call $H$ {normal} if $H(S)$ is a
normal subgroup of $G(S)$ for every $\partial$-$k$-algebra $S$. As in the classical setting, Yoneda lemma implies that, for a $\partial$-$k$-group-scheme $G$, the algebra $k\{G\}$ is
a $\partial$-$k$-Hopf algebra, i.e.,  a $\partial$-$k$-algebra equipped with the structure of a Hopf algebra
over $k$ such that the Hopf algebra structure maps are morphisms of $\partial$-rings. It also follows immediately that  the category of $\partial$-$k$-group-schemes is anti-equivalent
to the category of $\partial$-$k$-Hopf algebras. Then, since
 Hopf algebras over fields of characteristic zero are reduced  by \cite[Cartier's Theorem in \S 11.4]{Watschem}, we get that any $\partial$-$k$-group-scheme is automatically reduced.
Reduced  $\partial$-schemes correspond to differential varieties in the sense of Kolchin  (see for instance
\cite{kolchin-diff-algebra}), for whom it suffices to focus on the solution set of
a system of differential equations with value in a sufficiently big field, i.e., a $\partial$-closed field.

The $\partial$-schemes considered in this paper are all reduced.
Thus, we  only  define the {differential dimension} of a reduced  $\partial$-scheme.
So let $V$ be a reduced $\partial$-$k$-scheme.
We can write $k\{V\}=k\{x_1,\dots,x_n\}_\partial/\mathfrak{q}$
for some positive integer $n$ and some radical $\partial$-ideal $\mathfrak{q} \subset k\{x_1,\dots,x_n\}_\partial$.
Since $\mathfrak{q}$ is radical, by \cite[Theorem 7.5]{Kapldiffalg} there exists finitely many  prime
$\partial$-ideals $\mathfrak{p}_i$ such that $\mathfrak{q}= \cap \mathfrak{p}_i$.
Now, we can define the differential dimension of $V$ over $k$, denoted by $\partial$-$\operatorname{dim}(V|k)$ as the maximum of the
$\partial$-$\operatorname{trdeg}(L_i|k)$ where $L_i$ denotes the  fraction field of $k\{x_1,\dots,x_n\}_\partial/\mathfrak{p}_i$.
In \cite[III.\S 6.Proposition 3]{kolchin-diff-algebra}, Kolchin proved
that if $k\subset k'$ is an extension of $\partial$-field and if
V is a reduced $\partial$-$k$-scheme, then $\partial$-$\operatorname{dim}(V|k)=\partial$-$\operatorname{dim}(V_{k'}|k')$,
where $V_{k'}$ is the base extension of $V$ to $k'$.

Let $V$ be a $k$-scheme, \ie, a (covariant) functor from the category of
$k$-algebras to the category of sets which is representable by a $k$-algebra $k[V]$.
We call $k[V]$ the ring
of coordinates of $V$. In \cite{Gilletdiffalg}, the author shows that
the forgetful functor
$$
\eta : \hbox{~$\partial$-$k$-algebras~} \rightarrow \hbox{~$k$-algebras},
$$
that associates to any $\partial$-$k$-algebra its underlying $k$-algebra,  has a left
adjoint denoted by $D$. This implies that the functor $\mathbf V $  from the category of
$\partial$-$k$-algebras  to the category of Sets, defined by the composition of $V$ with the forgetful functor $\eta$
is a  $\partial$-$k$-scheme, whose ring of $\partial$-coordinates is precisely $D(k[V])$. We call
$\mathbf V$, the $\partial$-$k$-scheme attached to $V$. The simple idea behind this construction
is that polynomial equations are $\partial$-polynomials. More precisely if  $V \subset \mathbf{A}_k^n$, the affine space of dimension $n$ over $k$, and
if $I(V) \subset k[x_1,\dots,x_n]$ is the vanishing ideal of $V$ as subscheme of  $\mathbf{A}_k^n$ then
the vanishing ideal of $\mathbf V$ as $\partial$-$k$-subscheme of $\mathbf{A}_k^n$ is nothing else than
the $\partial$-ideal generated by $I(V)$ in $k\{x_1,\dots,x_n\}_\partial$. Finally, Kolchin
irreducibility theorem states that if $k[V]$ is a finitely generated integral $k$-algebra,
then  $D(k[V])$ is a finitely $\partial$-generated integral $\partial$-$k$-algebra and
 the dimension of $V$ as $k$-scheme coincides with the $\partial$-dimension of $\mathbf V$ over $k$ (\cite[\S 2]{Gilletdiffalg}).

Conversely, given a $\partial$-$k$-subscheme $\mathbf V$ of some $\mathbf{A}_k^n$, we can
attach to $\mathbf V$ a $k$-subscheme of $\mathbf{A}_k^n$ as follows. Let
$\I(\mathbf V) \subset k\{x_1,\dots,x_n \}_\partial$ be the vanishing ideal
of $\mathbf V$ in $\mathbf{A}_k^n$. Let   ${\mathbf V}^Z$ be the $ k$-subscheme  of $\mathbf{A}_k^n$
defined by the ideal $\I(V) \cap k[x_1,\dots,x_n]$. We call  ${\mathbf V}^Z$
the Zariski closure of $\mathbf V$ inside $\mathbf{A}_k^n$.  If $k$ is $\partial$-closed then ${\mathbf V}^Z$ is the closure of
$\mathbf V$ with respect to the Zariski topology.



\newcommand{\noopsort}[1]{}
\providecommand{\bysame}{\leavevmode\hbox to3em{\hrulefill}\thinspace}
\providecommand{\MR}{\relax\ifhmode\unskip\space\fi MR }
\providecommand{\MRhref}[2]{%
  \href{http://www.ams.org/mathscinet-getitem?mr=#1}{#2}
}
\providecommand{\href}[2]{#2}

\end{document}